\newtheorem{thm}{Theorem}[section]
\newtheorem{prop}[thm]{Proposition}
\newtheorem{cor}[thm]{Corollary}
\newtheorem{lem}[thm]{Lemma}
\theoremstyle{definition}
\newtheorem{defn}[thm]{Definition}
\theoremstyle{remark}
\newtheorem{remarks}[thm]{Remarks}
\newtheorem{ex}[thm]{Example}
\numberwithin{equation}{section}
\newcommand{\QQ}{\mathbb{Q}}
\newcommand{\ZZ}{\mathbb{Z}}
\newcommand{\NN}{\mathbb{N}}
\newcommand{\RR}{\mathbb{R}}
\newcommand{\RRk}{\mathbb{R}^k}
\newcommand{\ZZk}{\mathbb{Z}^k}
\newcommand{\ZZkz}{\mathbb{Z}^k\setminus\{\bfo\}}
\newcommand{\bfo}{\mathbf{0}}
\newcommand{\bff}{\mathbf{f}}
\newcommand{\bft}{\mathbf{t}}
\newcommand{\bfs}{\mathbf{s}}
\newcommand{\ckz}{C_{k,0}}
\newcommand{\cko}{C_{k,1}}
\newcommand{\cka}{C_{k,\alpha}}
\newcommand{\cki}{C_{k,i}}
\begin{document}


\title{%
A local Benford Law for a class of arithmetic sequences}

\author[Cai]{Zhaodong Cai}
\email[Zhaodong Cai]{zhcai@sas.upenn.edu}
\address{Department of Mathematics\\
University of Pennsylvania\\
Philadelphia, PA 19104-6395\\
USA}

\author[Hildebrand]{A.J. Hildebrand}
\email[A.J. Hildebrand (corresponding author)]{ajh@illinois.edu}
\address{Department of Mathematics\\
University of Illinois\\
Urbana, IL 61801\\
USA}

\author[Li]{Junxian Li}
\email[Junxian Li]{jli135@illinois.edu}
\address{Department of Mathematics\\
University of Illinois\\
Urbana, IL 61801\\
USA}

\thanks{This work is based on a research project
carried out at the \emph{Illinois Geometry Lab} in 2016. 
Some of the experimental results in this paper were generated using the
\emph{Illinois Campus Computing Cluster}, a high performance
computing platform at the University of Illinois.
}

\subjclass{11K31 (11K06, 11N05, 11B05)}

\keywords{Benford's Law, Uniform Distribution, Sequences}

\date{September 17, 2018}

\begin{abstract}
It is well-known that sequences such as the Fibonacci numbers and the
factorials satisfy Benford's Law; that is, leading digits in these
sequences occur with frequencies given by $P(d)=\log_{10}(1+1/d)$,
$d=1,2,\dots,9$.  In this paper, we investigate leading digit
distributions of arithmetic sequences from a local point of view.  We
call a sequence locally Benford distributed of order $k$ if,
roughly speaking, $k$-tuples of consecutive leading digits behave like
$k$ independent Benford-distributed digits. This notion refines that of a
Benford distributed sequence, and it provides a way to quantify the
extent to which the Benford distribution persists at the local level. 
Surprisingly, most sequences known to satisfy Benford's Law have 
rather poor local distribution properties. 
In our main result we establish, for a large class of arithmetic
sequences, a ``best-possible'' local Benford Law; that is, we determine
the maximal value $k$ such that the sequence is locally Benford
distributed of order $k$.  The result applies, in particular, to  sequences
of the form $\{a^n\}$, $\{a^{n^d}\}$,
and $\{n^{\beta} a^{ n^\alpha}\}$, as
well as the sequence of factorials $\{n!\}$ 
and similar iterated product sequences.
\end{abstract}

\maketitle



\section{Introduction}

\subsection{Benford's Law}
\emph{Benford's Law} refers to the phenomenon that the leading
digits in many real-world data sets 
tend to satisfy
\begin{equation}
\label{eq:benford}
P(\text{leading digit is $d$})
=\log_{10}\left(1+\frac1d\right),\quad d=1,2,\dots,9.
\end{equation}
Thus, in a data set satisfying Benford's Law, a fraction of
$\log_{10}(1+1/1)$, or around  $30.1\%$, of all numbers in the set have
leading digit $1$ in their decimal representation, a fraction of
$\log_{10}(1+1/2)\approx 17.6\%$ have leading digit $2$, and so on.

The peculiar first-digit distribution  given by \eqref{eq:benford} was
first observed in 1881 by the astronomer Simon Newcomb \cite{newcomb} 
in tables of logarithms.  It did not receive much attention until 
some fifty years later when the physicist Frank
Benford \cite{benford} compiled extensive empirical evidence
for the ubiquity of this distribution across a wide range of real-life
data sets.  In a now classic table, Benford tabulated the distribution of
leading digits in twenty different data sets, ranging from areas of
rivers to numbers in street addresses and physical constants.  Benford's
table shows good agreement with Benford's Law for most of these data
sets, and an even better agreement if all sources of data are combined
into a single data set.

In recent decades, Benford's Law has received renewed interest, in
part  because of its applications as a tool in fraud detection.
Several books on the topic have appeared in recent years  
(see, e.g., \cite{berger2015},
\cite{miller2015}, 
\cite{nigrini2012}), and close to one thousand articles have been published
(see \cite{benfordonline}).  For an overview of Benford's Law, its
applications, and  its history we refer to the papers by Raimi
\cite{raimi1976} and Hill \cite{hill1995}. An in-depth survey of 
the topic can be found in the paper by Berger and Hill \cite{berger2011}.

\subsection{Benford's Law for mathematical sequences}
From a mathematical point of view, Benford's Law is closely connected
with the theory of \emph{uniform distribution modulo $1$} \cite{kuipers}.
In 1976 Diaconis \cite{diaconis} used this connection to prove rigorously
that Benford's Law holds (in the sense of asymptotic density) for
a class of exponentially growing sequences that includes the powers of $2$,
$\{2^n\}$, the Fibonacci numbers, $\{F_n\}$, and the sequence of
factorials, $\{n!\}$.  That is, in each of these sequences, the
asymptotic frequency of leading digits is given by \eqref{eq:benford}.

In recent years, a variety of other (classes of) natural
arithmetic sequences have been shown to satisfy Benford's Law.  In
particular, in 2011 Anderson, Rolen and Stoehr \cite{anderson2011} showed
that Benford's Law holds for the partition function $p(n)$ and for the
coefficients of an infinite class of modular forms.  In 2015,  Mass\'e and
Schneider \cite{masse2015} established Benford's Law for a class of
fast-growing sequences defined by iterated product operations, including
the superfactorials, $\prod_{k=1}^n k!$, the hyperfactorials,
$\prod_{k=1}^n k^k$, and sequences of the form $2^{P(n)}$, where $P(n)$ is a
polynomial.
On the other hand, the validity of Benford's Law
for doubly exponential sequences such as $\{2^{2^n}\}$ or $\{2^{F_n}\}$
remains an open problem.


\begin{figure}[h]
\begin{center}
\includegraphics[width=.7\textwidth]{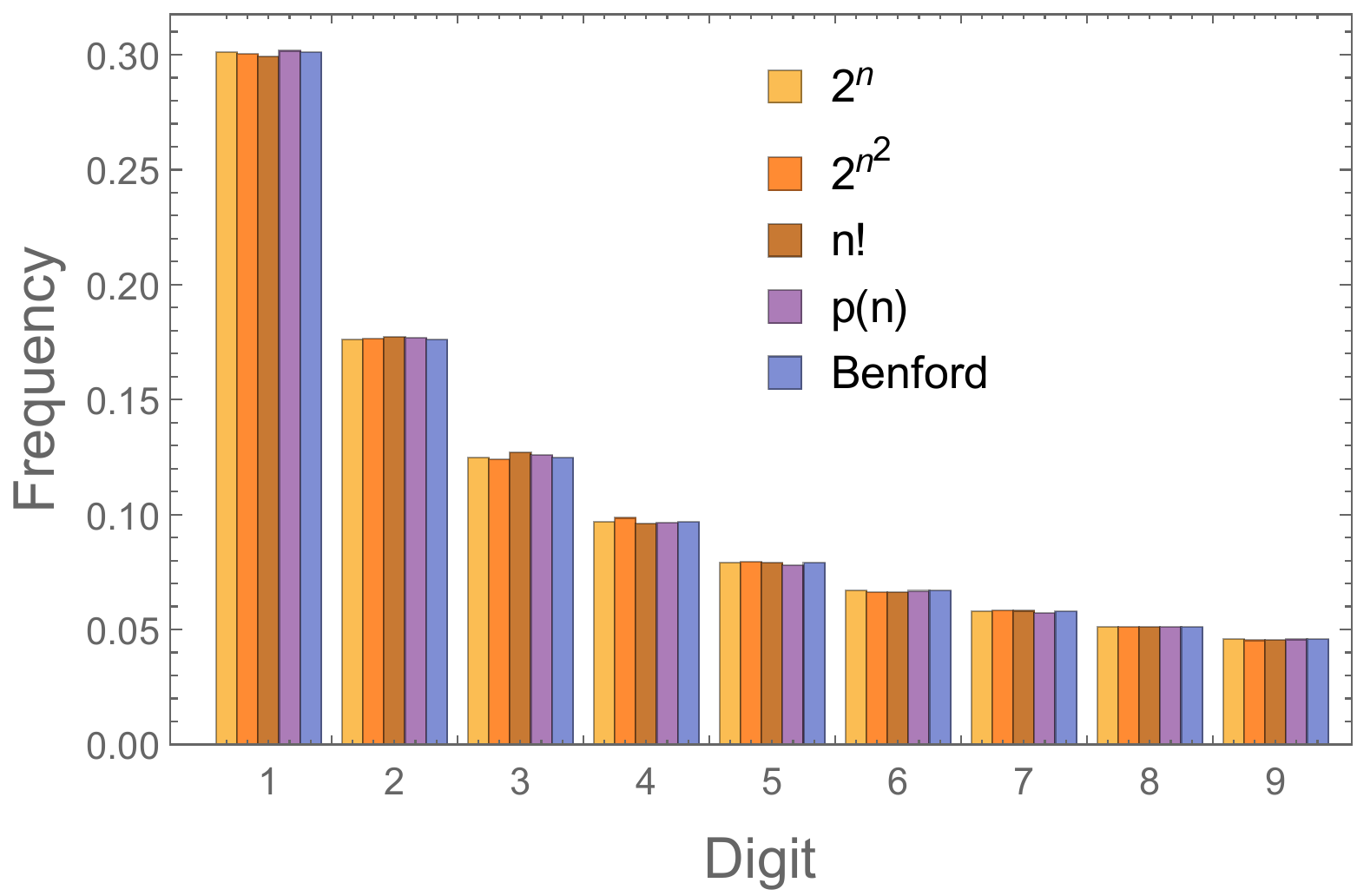}
\end{center}
\caption{Frequencies of leading digits (in base $10$) among the first
$10^5$
terms of the sequences $\{2^n\}$, $\{2^{n^2}\}$, $\{n!\}$,
and $\{p(n)\}$, 
along with the frequencies given by the Benford distribution
\eqref{eq:benford}.}
\label{fig:global}
\end{figure}


Figure \ref{fig:global} illustrates these results, showing the frequencies of
leading digits  for the sequences $\{2^n\}$, $\{2^{n^2}\}$, $\{n!\}$, and
$\{p(n)\}$ (where $p(n)$ is the partition function). 
The leading digit frequencies of all four sequences are in excellent 
agreement with the frequencies predicted by Benford's Law.


In contrast to these positive results, it has long been known 
(and is easy to see, e.g., by considering intervals of
the form $[10^k,2\cdot 10^k)$) that sequences of polynomial (or slower) rate of
growth such as $\{n\}$ or $\{n^2\}$ do not satisfy Benford's Law in the
usual asymptotic density sense.  In many of these cases, Benford's Law
can be shown to hold in some weaker form, for example, with the natural
asymptotic density replaced by other notions of density (see
\cite{masse2011} for a survey).


\subsection{Local Benford distribution}
As Figure \ref{fig:global} shows, in terms of the \emph{global} distribution
of leading digits, the four sequences $\{2^n\}$, $\{2^{n^2}\}$, $\{n!\}$, and
$\{p(n)\}$  all seem to behave in essentially the same way. This raises the
question of whether one can distinguish between such leading digit sequences
in some other way.  For example, if we are given a block of  consecutive
leading digits from each of these four sequences, as in Table
\ref{table:leadingdigits} below, can we tell, with a reasonable level of
confidence, to which sequence each block belongs?


\begin{table}[H]

\begin{center}
\begin{tabular}{|c|l|}
\hline
Sequence & Leading digits of first $50$ terms (concatenated)
\\
\hline
$\{2^n\}$&
2481361251
2481361251
2481361251
2481361251
2481371251
\\
\hline
$\{2^{n^2}\}$ &
2156365121
2271519342
5412132118
1169511474
1146399353
\\
\hline
$\{n!\}$ &
1262175433
3468123612
5126141382
8282131528
3162152163
\\
\hline
$\{p(n)\}$ &
1235711234
5711122346
7111123345
6811112233
4567811112
\\
\hline
\end{tabular}
\end{center}
\caption{Leading digits (in base $10$) of the first $50$
terms of the sequences $\{2^n\}$, $\{2^{n^2}\}$, $\{n!\}$, 
and $\{p(n)\}$.
}
\label{table:leadingdigits}
\end{table}

All four sequences in this table are  
known to satisfy Benford's Law, so in terms of the \emph{global}
distribution of leading digits they behave in roughly the same way. 
This behavior is already evident in the limited data shown in Table
\ref{table:leadingdigits}.  For example, among the first $50$ terms of the
sequence $\{2^n\}$ exactly $15$ have leading digit $1$, while the digit $1$
counts for the other three sequences are $15$, $14$, and $18$, respectively. 
These counts are close to the counts predicted by Benford's law, namely
$50\cdot \log_{10}2\approx 15.05\dots$.

A closer examination of Table \ref{table:leadingdigits} reveals significant
differences at the \emph{local} level: The leading digits of $\{2^n\}$ exhibit
an almost periodic behavior with a strong (and obvious) correlation between
consecutive terms, while the sequence $\{p(n)\}$ shows a noticeable tendency
of digits to repeat themselves.  On the other hand, the leading digits of the
sequences $\{2^{n^2}\}$ and $\{n!\}$ appear to behave more ``randomly'',
though it is not clear to what extent this randomness persists at the local
level.  Is one of the latter two sequences more ``random'' in some sense than
the other?  

In this paper we seek to answer questions of this type by studying
the leading digit distribution of arithmetic sequences from a \emph{local}
point of view. More precisely, we will focus on the distribution of $k$-tuples
of leading digits of consecutive terms in a sequence, and the question of when
this distribution is asymptotically the same as that of $k$ independent
Benford distributed random variables.


When viewed from such a local perspective, striking differences between 
sequences can emerge.  This is illustrated in Figure \ref{fig:local}, which
shows the frequencies of (selected) pairs of leading digits for the same four
sequences that we considered in Figure \ref{fig:global}.  In stark contrast to
the single digit frequencies shown in Figure \ref{fig:global}, the frequencies
of pairs of leading digits  vary widely from sequence to sequence.



\begin{figure}[H]
\begin{center}
\includegraphics[width=.6\textwidth]{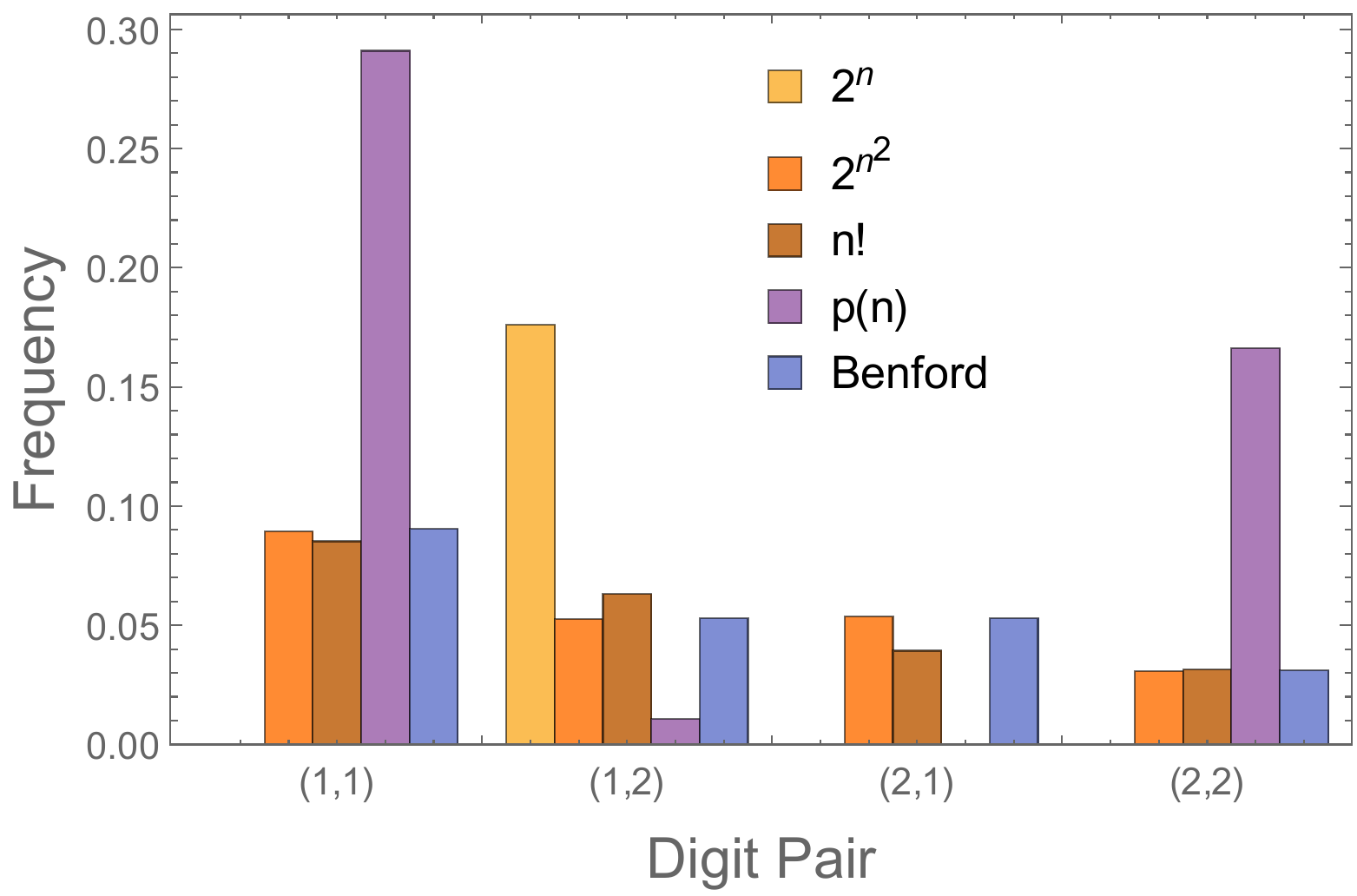}
\end{center}

\caption{Frequencies
of selected pairs $(d_1,d_2)$ of 
leading digits among the first $10^5$
terms of the sequences $\{2^n\}$, $\{2^{n^2}\}$, $\{n!\}$,
and $\{p(n)\}$, along with the predicted frequencies, given by 
$P(d_1)P(d_2)=\log_{10}(1+1/d_1)\log_{10}(1+1/d_2)$.
(Note that some of bars in the chart have height $0$ as the
corresponding frequencies are $0$.)} 
\label{fig:local}
\end{figure}

Figure \ref{fig:local} suggests that for the sequence $\{2^{n^2}\}$ 
(but not for any of the other sequences in this figure) pairs of leading
digits are distributed like independent Benford distributed random
variables.  Large scale computations of this sequence, shown in Table
\ref{table:twonsquared}, provide compelling numerical evidence for
this behavior.

\begin{table}[H]

\begin{center}
\begin{tabular}{|c|c|c|c|c|}
\hline
$N$& $(1,1)$ & $(1,2)$ & $(2,1)$ & $(2,2)$ 
\\
\hline
$10^4$ &
0.0888777&
0.054454&
0.0552592&
0.0312028

\\
\hline
$10^5$ &
0.0894678&
0.0528286&
0.053985&
0.0306747

\\
\hline
$10^6$ &
0.0906688&
0.0527907&
0.0528264&
0.0308901

\\
\hline
$10^7$ &
0.0906353&
0.052968&
0.0529541&
0.0309502

\\
\hline
$10^8$ &
0.0906542&
0.0529921&
0.0529683&
0.0310264

\\
\hline
$10^9$&
0.0906257&
0.0530009&
0.0530023&
0.0310054
\\
\hline
\hline
Benford  &
0.0906191&
0.0530088&
0.0530088&
0.0310081
\\
\hline
\end{tabular}
\end{center}
\caption{Frequencies of selected pairs $(d_1,d_2)$ of 
leading digits among the first $N$
terms ($N=10^k$, $k=4,5,\dots,9$) of the sequence $\{2^{n^2}\}$, 
along with the predicted frequencies, given by
$P(d_1)P(d_2)=\log_{10}(1+1/d_1)\log_{10}(1+1/d_2)$.}
\label{table:twonsquared}
\end{table}

The results we will prove in this paper confirm the behavior 
suggested by Figure \ref{fig:local}
and Table \ref{table:twonsquared}.
We will show 
that, among the four sequences shown in Figures \ref{fig:global} and
\ref{fig:local}, only the sequence $\{2^{n^2}\}$ has the property that
pairs of leading digits are distributed like independent Benford distributed
variables.  In this sense the sequence $\{2^{n^2}\}$ is the ``most
random'' among these four sequences.

\subsection{Summary of results and outline of paper}
We call a sequence \emph{locally Benford distributed of
order $k$} if, roughly speaking,  $k$-tuples of leading digits of
consecutive terms in this sequence  are distributed like independent
Benford-distributed digits (see Definition \ref{def:localBenford} below
for a precise statement).  This notion refines that of a Benford
distributed sequence, which corresponds to the case $k=1$,  and it
provides a way to quantify the extent to which a Benford distributed
sequence retains this property at the local level. 

Given a Benford-distributed sequence (or, equivalently, a sequence that is
locally Benford of order $1$), one can ask for the maximal value 
$k^*$ (if it exists) such that the sequence is locally Benford of order
$k^*$.  If such a value $k^*$ exists, we call $k^*$ the \emph{maximal
local Benford order} of the sequence; otherwise we say that the sequence
has infinite maximal local Benford order.

Our main result, Theorem \ref{thm:main1}, determines this maximal local
Benford order for a large class of arithmetic sequences that includes 
the main classes of sequences known to satisfy Benford's Law. 
The result applies, in particular, to 
sequences of the form $\{a^n\}$, $\{a^{n^d}\}$, 
and $\{n^{\beta} a^{ n^\alpha}\}$, as well as 
the factorials, the superfactorials, $\prod_{k=1}^nk!$,
and similar iterated product sequences,
and it allows us to classify these sequences according to
their maximal local Benford order. 
For example, we will show 
(see Example \ref{ex:cor2}(2)(4), Corollary \ref{thm:cor1}, 
and Example \ref{ex:iteratedprod}(1) along with Theorem \ref{thm:main1}(ii))
that the sequences $\{2^n\}$, $\{n!\}$, and $\{p(n)\}$ 
have maximal local Benford order $1$ (so that, in particular,
pairs of consecutive leading digits are not independent), while  $\{2^{n^2}\}$
has maximal local Benford order $2$ (so that pairs of consecutive leading
digits are independent, while triples are not).  This answers the above
question about the degree of ``local randomness'' in the four sequences shown in
Table \ref{table:leadingdigits}: The sequence $\{2^{n^2}\}$ is the most
``random'' of these sequences, in the sense of having the largest maximal
Benford order. 

The sequences covered by Theorem \ref{thm:main1} 
all have finite maximal Benford order.
To complement this result, we show in Theorem \ref{thm:main2} the existence of
sequences with infinite maximal local Benford order.  Specifically, we
consider doubly exponential sequences of the form $\{a^{\theta^n}\}$, where
$a>1$. Using metric results from the theory of uniform distribution modulo
$1$, we show that,  for almost all real numbers $\theta>1$, such a sequence
has infinite maximal local Benford order.  We also show that when
$\theta$ is an \emph{algebraic} number, the sequence $\{a^{\theta^n}\}$ 
has finite maximal local Benford order, given by the degree of $\theta$ over
$\QQ$.  The latter result applies, in particular, to the sequences
$\{2^{2^n}\}$ and $\{2^{F_n}\}$, which are conjectured (but not known) to
be Benford distributed. It shows that the maximal local Benford order of
these sequences is at most $1$ in the case of $\{2^{2^n}\}$, and $2$ in the
case of $\{2^{F_n}\}$.

\medskip

The remainder of this paper is organized as follows. In Section
\ref{sec:notations}, we introduce some notation and 
state our main results, Theorems
\ref{thm:main1} and \ref{thm:main2},
and some consequences and corollaries of these results.
In Section \ref{sec:uniform-distribution}, 
we introduce some background from the theory of
uniform distribution modulo $1$ and we 
state our key tool, Proposition \ref{prop:key}, 
a result on the uniform distribution modulo
$1$ of $k$-tuples $(f(n),\dots,f(n+k-1))$, for certain classes of
functions $f(n)$. We conclude this section by deducing  
Theorem \ref{thm:main1} from Proposition \ref{prop:key}. 
Section \ref{sec:proof-prop-key} is devoted to the proof  of Proposition
\ref{prop:key}. Section \ref{sec:proof-corollaries} contains the proof  
of the corollaries of Theorem \ref{thm:main1}. 
Theorem \ref{thm:main2} is proved in Section  \ref{sec:proof-thm-main2}.
The final section, Section \ref{sec:concluding-remarks}, contains some
remarks on related results and possible extensions and generalizations 
of our results.


\section{Notation and statement of results}
\label{sec:notations}

\subsection{Notational conventions}

Following the lead of most of the recent literature on 
Benford's Law in a mathematical context, we consider leading digits 
with respect to expansions in a 
general base $b$, where $b$ is an integer $\ge 2$.  The base $b$ analog
of the  Benford distribution \eqref{eq:benford} is given by
\begin{equation} \label{eq:benfordBaseb}
P_b(d)=\log_{b}\left(1+\frac1d\right),\quad d=1,2,\dots,b-1.
\end{equation} 
The notations, definitions, and results  we will introduce are understood
to hold in this general setting. The dependence on $b$ may not
be explicitly stated if it is clear from the context.

We let $\{a_n\}$ denote a sequence of positive real numbers, indexed by
the natural numbers $n=1,2,\dots$.  When convenient, we will use
functional instead of subscript notation, and write a sequence as
$\{f(n)\}$. 

Given a real number $x$, we denote by $\lfloor x\rfloor$ (resp. $\lceil x
\rceil$) the floor (resp. ceiling) of $x$, and we let 
$\{x\}=x-\lfloor x\rfloor$ denote the fractional part of $x$.  (The curly
brace notation is also used to denote sequences, but the meaning will
always be clear from the context.)  

We denote by $\log_bx$ the logarithm of $x$ in base $b$; that is, 
$\log_b x=(\log x)/(\log b)$. 

We use the asymptotic notations $\sim$, $o(\dots)$, and
$O(\dots)$, in the usual sense: $f(n)\sim g(n)$ means $\lim_{n\to\infty}
f(n)/g(n)=1$; $f(n)=o(g(n))$ means $\lim_{n\to\infty} f(n)/g(n)=0$; and
the notation $f(n)=O(g(n))$ means that there exists a constant $C$
(independent of $n$, but possibly depending on other parameters) such
that $|f(n)|\le C |g(n)|$ holds for all $n$.

Vectors and vector-valued functions are denoted by boldface symbols, and
their components are indicated by subscripts, with indices starting at
$0$.  For example,  $\bff(n)=(f_0(n),\dots,f_{k-1}(n))$,
$\bft=(t_0,\dots,t_{k-1})$.  We denote by $\bfo$ the zero vector.

\subsection{Local Benford distribution}

Given a positive real number $x$ and an integer base $b\ge 2$, we let
$d_b(x)$ denote the leading (i.e., most significant) digit of $x$ when
expressed in base $b$.  We then have, for any $d\in\{1,2,\dots,b-1\}$, 
\begin{align*}
d_b(x) = d
& \Longleftrightarrow 
d \cdot b^k\le x<(d+1) b^k \text{ for some $k\in\ZZ$ }
\\
& \Longleftrightarrow  
\log_b d \le   \{\log_{b} x\}< \log_b(d+1).
\end{align*}
This equivalence relates the distribution of leading digits in base
$b$ of a set of numbers $x$ to  that of the fractional parts $\{\log_b x\}$. 
In particular, if these fractional parts are uniformly distributed in the
interval $[0,1]$, then  for each $d$ the proportion of numbers $x$ with
leading digit $d$ will be $\log_b(d+1)-\log_b d = \log_b (1+1/d)$, i.e.,  
the probability $P_b(d)$ given in \eqref{eq:benfordBaseb}.
This motivates the following definition:

\begin{defn}[Local Benford distribution]
\label{def:localBenford}
Let $b$ be an integer base $\ge 2$, and let $\{a_n\}$ be a sequence of 
positive real numbers. 
\begin{itemize}
\item[(i)] The sequence $\{a_n\}$ is called \emph{Benford distributed with
respect to base $b$} if 
\begin{equation}
\label{eq:defBenford}
\lim_{N\to\infty} \# \frac1N\{n\le N: \{\log_b a_n\} \le \alpha\}=\alpha 
\quad (0\le \alpha\le 1).
\end{equation}

\item[(ii)] Let $k$ be a positive integer. 
The sequence $\{a_n\}$ is called \emph{locally Benford distributed of
order $k$ with respect to base $b$} if 
\begin{align}
\label{eq:defLocalBenford}
\lim_{N\to\infty} &\#\frac1N\{n\le N: \{\log_b a_{n+i}\}
\le \alpha_i \ (i=0,1,\dots,k-1)\}
\\
\notag
&\qquad
= \alpha_0\alpha_1\dots \alpha_{k-1}
\quad (0\le \alpha_0,\alpha_1,\dots,\alpha_{k-1}\le 1).
\end{align}
\end{itemize}
\end{defn}

\begin{remarks}
\mbox{}
(1) An alternative way to define a Benford distributed sequence 
would be to require the leading digits to have the asymptotic frequencies
given by \eqref{eq:benfordBaseb}: 
\begin{equation}
\label{eq:defBenfordalt}
\lim_{N\to\infty} \# \frac1N\{n\le N: d_b(a_n)=d\}
=\log_b\left(1+\frac1d\right)
\quad (d=1,\dots,b-1).
\end{equation}
This amounts to restricting $\alpha$ in the given definition,
\eqref{eq:defBenford}, to the discrete values $\alpha=\log_b (d+1)$,
$d=1,\dots,b-1$, and thus yields a slightly weaker property.\footnote{%
As shown by Diaconis \cite{diaconis}, the definition 
\eqref{eq:defBenford} is equivalent to the property that, for any 
positive integer $D$, the asymptotic frequency of terms $a_n$ whose
``leading digit block'' is given by the base $b$ expansion of $D$, 
is equal to $\log_b(1+1/D)$.}
Similarly, the notion of local Benford distribution could have been
defined by the relation 
\begin{align}
\label{eq:defLocalBenfordalt}
\lim_{N\to\infty} &\# \frac1N\{n\le N: d_b(a_{n+i})
=d_i \  (i=0,1,\dots,k-1)\} 
\\
\notag
&\qquad =\prod_{i=0}^{b-1}\log_b\left(1+\frac1{d_i}\right)
\quad (d_i=1,\dots,b-1), 
\end{align}
which would yield a slightly weaker property.

We decided to adopt the stronger definition,  
\eqref{eq:defBenford}, as the basis for our concept of local Benford
distribution since this is the definition most commonly used in 
the recent literature on the subject.

(2) In the case $k=1$, the definition of local Benford distribution, 
\eqref{eq:defLocalBenford}, reduces to that of the  ordinary Benford
distribution \eqref{eq:defBenford}.

(3) It is immediate from the definition that local Benford distribution
of order $k$ implies local Benford distribution of any smaller order
$k'<k$, and in particular implies  
Benford distribution in the sense of \eqref{eq:defBenford}.
As pointed out by the referee, a more general version of this
observation holds: If
a sequence $\{a_n\}$ is locally Benford distributed of order $k$, then for any
integers $0\le i_1<i_2<\dots<i_h\le k$, the tuple
$(a_{n+i_1},\dots,a_{n+i_h})$ behaves like a tuple of independent Benford
distributed random variables.

\end{remarks}

\begin{defn}[Maximal local Benford order]
\label{def:maxorder}
Let $b$ be an integer base $\ge 2$ and let $\{a_n\}$ be a sequence of
positive real numbers that is Benford distributed with respect to base $b$
(and hence also locally Benford distributed of order at least $1$). 
\begin{itemize}
\item[(i)] 
If there exists a maximal integer $k^*$ such that $\{a_n\}$ 
is locally Benford distributed of order $k^*$, then we call $k^*$ 
the \emph{maximal local Benford order} of the sequence $\{a_n\}$ 
with respect to base $b$.
\item[(ii)] 
If no such integer $k^*$ exists (i.e., if $\{a_n\}$ 
is locally Benford distributed of any order), we say that $\{a_n\}$ has 
\emph{infinite maximal local Benford order} with respect to base $b$.
\end{itemize}
\end{defn}

\subsection{The classes $\cki$}

Let $\Delta$ be the difference operator (or discrete derivative)
defined by
\begin{equation}
\label{eq:defDelta}
\Delta f(n)=f(n+1)-f(n), 
\end{equation}
and let $\Delta^k$ denote the $k$-th iterate of this operator, so that
\begin{equation}
\label{eq:defDeltak}
\Delta^1 f(n)=\Delta f(n), \quad
\Delta^{k+1} f(n)= \Delta (\Delta^k f(n))\quad
(k=1,2,\dots).
\end{equation}

We classify sequences $\{f(n)\}$ into classes $C_{k,i}$ according
their asymptotic behavior under this iterated difference operator:

\begin{defn}[Classes $\cki$]
\label{def:ck}
Let $\{f(n)\}$ be a sequence of real numbers and let $k$ be a positive
integer.  We say that $\{f(n)\}$ is of 
\begin{itemize}
\item[(i)] class $\ckz$ if for some $\theta\not\in\QQ$, 
\[
\lim_{n\to\infty} \Delta^k f(n)=\theta;
\]

\item[(ii)] class $\cka$, where $0<\alpha<1$, 
if for some $\lambda\not=0$,
\[
\lim_{n\to\infty} n^\alpha \Delta^k f(n)=\lambda;
\]

\item[(iii)] class $\cko$, if for some $\lambda\not=0$,
\[
\lim_{n\to\infty} n \Delta^{k+1} f(n)=\lambda.
\]
\end{itemize}
We denote by $\cki$ any of the three classes (i)--(iii). 
\end{defn}

Roughly speaking, class $\ckz$ covers functions of growth 
proportional to $n^k$ and with irrational leading coefficient, 
class $\cka$ covers functions of growth proportional to $n^{\beta}$, 
where $\beta>0$ is \emph{not} an integer, 
while class $\cko$ covers functions of growth proportional 
to $n^k\log n$.

\begin{remarks}
\label{rem:ck}
\mbox{}
(1) Note that the definition of the class $\cko$ involves the $(k+1)$st
iterate of the $\Delta$ operator, whereas the other two classes involve
the $k$th iterate. This convention will allow us to state our results in
a uniform manner for all three classes.  

(2) It is not hard to see that a sequence $\{f(n)\}$ can belong to at most
one of the classes $\cki$; that is, both $k$ and the class subscript $i$
are uniquely determined by the sequence $\{f(n)\}$. For example, suppose 
$f$ is of class $\ckz$ for some $k\in\NN$. Then
\[
\Delta^{k+1}f(n)=\Delta^kf(n+1)-\Delta^kf(n)\to \theta-\theta=0, 
\]
and inductively we get $\Delta^{k+i}f(n)\to0$ for all $i\ge 1$. 
Thus $f$ cannot be of class $C_{k',0}$ for some $k'>k$.
Interchanging the roles of $k'$ and $k$, we see that
$f$ also cannot be of class $C_{k',0}$ for some $k'<k$. Hence, a 
sequence $\{f(n)\}$ can be in class $\ckz$ for a most one value of $k$.
Using similar reasoning one can show that $k$ is uniquely determined for
sequences in the other two classes, $\cka$ and $\cko$, and that a
sequence can belong to at most one such class.

(3) It is clear from the definition that $\{\Delta f(n)\}$ belongs to
$C_{k,i}$ if and only if $\{f(n)\}$  belongs to
$C_{k+1,i}$.   More generally, for any positive integer $h$ 
the sequence $\{\Delta^h f(n)\}$ belongs to $C_{k,i}$ if and only if 
the sequence $\{f(n)\}$ belongs to $C_{k+h,i}$.

(4) A sufficient condition for $f$ to be in some class $\cki$ is that $f$
is a function defined on $[1,\infty)$ satisfying the continuous analog of
the given condition, i.e., with the $k$-th discrete derivative $\Delta^k
f(n)$ replaced by the ordinary $k$-th derivative $f^{(k)}(x)$.  
To see this, note that if $f$ has continuous derivatives up to order $k$,
then $\Delta^k f$ can be represented as an integral over the $k$-th
derivative of $f$  of the form $(*)$ $\Delta^k f(n)=\int_{0}^k f^{(k)}(n+t)
\phi_k(t)dt$, where $\phi_k(t)$ is a nonnegative kernel function supported on
$[0,k]$ that integrates to $1$.  The identity $(*)$ can be proved by
induction. 
\end{remarks}

\medskip

Using the last of these remarks easily yields a large class of examples 
of functions belonging to one of the classes $\cki$: 

\begin{ex}
\label{ex:ck}
\mbox{}
\begin{itemize}
\item[(1)]
For any nonconstant polynomial $P(x)$ with 
irrational leading coefficient, $\{P(n)\}$ is of class $\ckz$ with
$k=\deg(P)$.
In particular, $\{\theta n^k\}$ is of class $\ckz$ whenever $\theta$ is
irrational.

\item[(2)]
For any nonconstant polynomial $P(x)$, the sequence $\{P(n)\log n\}$ 
is of class $\cko$ with $k=\deg(P)$. 
In particular,  $\{n^k\log n\}$ is of class $\cko$.

\item[(3)]
For any positive real number $\beta$ that is not an integer,   
the sequence $\{n^\beta\}$ is of class $\cka$ with
$k=\lceil\beta\rceil$ and $\alpha=k-\beta$.

\end{itemize}

\end{ex}

\subsection{Iterated product sequences}
Given a sequence $\{a_n\}$, we define 
the \emph{iterated product sequences} $\{a_n^{(h)}\}$ 
by (cf. \cite{masse2015}) 
\begin{equation}
\label{eq:iteratedprod}
a_n^{(1)}= a_n, \quad a_n^{(h+1)} = \prod_{m=1}^n a_m^{(h)}\quad
(h=1,2,\dots).
\end{equation}

\pagebreak[3]

\begin{ex}
\label{ex:iteratedprod}
\mbox{}
\begin{itemize}
\item[(1)]
If $a_n=n!$, then the numbers $n!^{(2)}=\prod_{m=1}^n m!$ are the 
superfactorials, and the numbers $n!^{(h)}$ are generalized
superfactorials, obtained by iterating the product operation. 

\item[(2)]
If $a_n=a^{P(n)}$, where $P$ is a polynomial of degree $d$, then
$a_n^{(h)}=a^{P_h(n)}$, where $P_h(n)$ is a polynomial of degree $d+h-1$, 
defined inductively by $P_1(n)=P(n)$ and $P_{h+1}(n)=\sum_{m=1}^n
P_h(m)$.
\end{itemize}
\end{ex}

\subsection{Statement of results}
With the above definitions, we are ready to state our main result. 

\begin{thm}[Main Theorem]
\label{thm:main1}
Let $b$ be an integer base $\ge 2$, let $\{a_n\}$ be a sequence of 
positive real numbers, and suppose the sequence $\{\log_b a_n\}$ belongs
to one of   the classes $\cki$ in Definition \ref{def:ck}.
Then:
\begin{itemize}
\item[(i)] The sequence $\{a_n\}$ 
has maximal local Benford order $k$ with respect to base $b$; 
that is, $\{a_n\}$ is locally Benford distributed of order $k$, 
but not of any higher order.
\item[(ii)] For any positive integer $h$, 
the iterated product sequences $\{a_n^{(h)}\}$ has 
maximal local Benford order $k+h-1$ with respect to base $b$. 
\end{itemize}
\end{thm}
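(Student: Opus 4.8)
The plan is to translate local Benford distribution into uniform distribution modulo~$1$ and then apply Proposition~\ref{prop:key}. Write $f(n)=\log_b a_n$. First I would record the elementary equivalence: by \eqref{eq:defLocalBenford}, $\{a_n\}$ is locally Benford of order $m$ exactly when the $m$-tuples $(f(n),f(n+1),\dots,f(n+m-1))$, $n\ge1$, have limiting distribution function $\alpha_0\cdots\alpha_{m-1}$ on the corner boxes $\prod_i[0,\alpha_i]$, and since indicators of arbitrary boxes in $[0,1]^m$ are $\ZZ$-linear combinations of indicators of corner boxes, this is exactly u.d.\ mod~$1$ of these $m$-tuples in $\RR^m$ (cf.\ \cite{kuipers}). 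Since local Benford distribution of order $m$ implies order $m-1$ (see the remarks following Definition~\ref{def:localBenford}), to prove (i) it suffices to show that $(f(n),\dots,f(n+k-1))$ is u.d.\ mod~$1$ while $(f(n),\dots,f(n+k))$ is not. The first statement is precisely Proposition~\ref{prop:key} applied to $\{f(n)\}\in\cki$, so the positive half of (i) is done.

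For the negative half I would argue by contradiction: if $(f(n),\dots,f(n+k))$ were u.d.\ mod~$1$ in $\RR^{k+1}$, then taking the integer vector $\bfh$ with components $h_j=(-1)^{k-j}\binom{k}{j}$ $(0\le j\le k)$, which is nonzero and satisfies $\bfh\cdot(f(n),\dots,f(n+k))=\Delta^kf(n)$, Weyl's criterion (applied to $\bfh$ and to all its integer multiples) would force the one-dimensional sequence $\{\Delta^kf(n)\}$ to be u.d.\ mod~$1$. I would then contradict this using the class condition: for $\ckz$ one has $\Delta^kf(n)\to\theta\notin\ZZ$, so $\{\Delta^kf(n)\}$ converges to the single point $\{\theta\}$; for $\cka$ one has $\Delta^kf(n)=\lambda n^{-\alpha}(1+o(1))\to0$; and for $\cko$, summing $\Delta^{k+1}f(n)\sim\lambda/n$ gives $\Delta^kf(n)=\lambda\log n+o(\log n)$, which is eventually monotone and tends to $\pm\infty$, and a sequence of this type is not u.d.\ mod~$1$. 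In all three cases $\{\Delta^kf(n)\}$ fails to be u.d.\ mod~$1$, the desired contradiction, completing (i).

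For (ii) I would set $F^{(h)}(n)=\log_b a_n^{(h)}$ (well defined, as the $a_n^{(h)}$ are positive) and use the recursion \eqref{eq:iteratedprod}, which gives $F^{(h+1)}(n)=\sum_{m=1}^nF^{(h)}(m)$ and hence $\Delta F^{(h+1)}(n)=F^{(h)}(n+1)$. Since the classes $\cki$ are defined through limits of iterated differences, they are invariant under the index shift $n\mapsto n+1$, so $\{\Delta F^{(h+1)}(n)\}$ lies in the same class as $\{F^{(h)}(n)\}$; by Remark~\ref{rem:ck}(3) this puts $\{F^{(h+1)}(n)\}$ in $C_{k+h,i}$. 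Starting from $F^{(1)}=f\in\cki$, induction on $h$ gives $F^{(h)}\in C_{k+h-1,i}$ for all $h\ge1$, and applying part~(i) to the sequence $\{a_n^{(h)}\}$, whose base-$b$ logarithm is $F^{(h)}$, yields maximal local Benford order $k+h-1$.

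The step I expect to be the main obstacle is the class $\cko$ in the negative half of (i). For $\ckz$ and $\cka$ the linear combination $\Delta^kf(n)$ of the $(k+1)$-tuple converges modulo~$1$, so one obtains a non-vanishing Weyl exponential sum for free; but for $\cko$ no integer combination of $f(n),\dots,f(n+k)$ converges, and one must instead prove directly that an eventually monotone sequence with $\Delta^kf(n)=\lambda\log n+o(\log n)$ fails to be uniformly distributed modulo~$1$ — in essence the classical computation that $\{c\log n\}$ has a non-Lebesgue limiting distribution — and then transfer this back to the $(k+1)$-tuple. This is also the point where the precise form of the class $\cko$ condition, rather than merely $\Delta^{k+1}f(n)\to0$, is actually needed.
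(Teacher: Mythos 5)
Your proposal is correct and follows essentially the same route as the paper: translate local Benford distribution of order $m$ into uniform distribution mod~$1$ of the $m$-tuples $(f(n),\dots,f(n+m-1))$ (the paper's Lemma~\ref{lem:localBenford} together with Corollary~\ref{cor:weyl}), invoke Proposition~\ref{prop:key}(i) for the positive half, use the binomial vector realizing $\Delta^k f(n)$ as an integer combination of $f(n),\dots,f(n+k)$ to defeat order $k+1$, and use $\Delta F^{(h+1)}(n)=F^{(h)}(n+1)$ together with Remark~\ref{rem:ck}(3) for part~(ii) — this is exactly \eqref{eq:Deltafh} in the paper. The one genuine divergence is the class $\cko$ in the negative half. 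The paper argues directly from $\Delta^{k+1}f(n)\sim\lambda/n$ that on a range $N\le n\le(1+\delta)N$, with $\delta=\min(1,1/(4|\lambda|))$, the values $\Delta^kf(n)$ move by at most $1/3$, so their fractional parts miss an interval of length $1/3$ on a positive proportion of indices. You instead invoke the classical fact that an eventually monotone sequence tending to infinity that is u.d.\ mod~$1$ must satisfy $x_n/\log n\to\infty$; since $\Delta^kf(n)$ is eventually monotone (as $\Delta^{k+1}f(n)$ eventually has the sign of $\lambda$) and equals $\lambda\log n+o(\log n)$, this also works, and is arguably cleaner. One caution: as you half-acknowledge, the asymptotic $\lambda\log n+o(\log n)$ \emph{alone} does not rule out u.d.\ mod~$1$ (the $o(\log n)$ term is not controlled pointwise and can be unbounded), so monotonicity is doing real work in your argument and the appeal to ``the classical computation for $\{c\log n\}$'' is too weak as stated; you need the monotone-sequence growth criterion (a standard exercise in Kuipers--Niederreiter) or, failing that, the paper's increment-based argument, which uses the full strength of $n\Delta^{k+1}f(n)\to\lambda$ rather than its logarithmic integral.
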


The conditions on $\{a_n\}$ in this theorem are general enough to 
cover most of the classes of functions previously considered in the
literature.  In the following corollaries we give some special cases and
consequences of this result.

Our first corollary is motivated by applications to the partition
function and coefficients of modular forms (see \cite{anderson2011}).  

\begin{cor}
\label{thm:cor1}
Suppose 
\begin{equation}
\label{eq:cor1}
a_n\sim \lambda n^{\gamma} e^{c n^\beta},
\end{equation}
where $\lambda,\gamma,c,\beta$ are constants with 
$\lambda>0,c>0,\beta>0$, and  
$\beta$ not an integer.  Then 
$\{a_n\}$ has maximal local Benford order $\lceil \beta\rceil$ with
respect to any base $b\ge 2$.
\end{cor}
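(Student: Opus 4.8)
The plan is to reduce this to Theorem \ref{thm:main1}(i) by showing that $\{\log_b a_n\}$ belongs to the class $C_{k,\alpha}$ with $k=\lceil\beta\rceil$ and $\alpha=k-\beta$. Writing $f(n)=\log_b a_n$, the hypothesis $a_n\sim\lambda n^\gamma e^{cn^\beta}$ gives
\[
f(n)=\log_b\lambda+\gamma\log_b n+\frac{c}{\log b}\,n^\beta+o(1).
\]
Heuristically, applying the difference operator $\Delta$ a total of $k=\lceil\beta\rceil$ times kills the bounded and logarithmic pieces (after enough differences, $\log n$ contributes $O(n^{-k})$, which vanishes after multiplication by $n^\alpha$ since $\alpha<1\le k$) and leaves a main term coming from $\Delta^k(n^\beta)$. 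Since the continuous $k$th derivative of $x^\beta$ is $\beta(\beta-1)\cdots(\beta-k+1)x^{\beta-k}$ and this does not vanish (as $\beta$ is not an integer, none of the factors is zero), one expects $\Delta^k(n^\beta)\sim\beta(\beta-1)\cdots(\beta-k+1)\,n^{\beta-k}=\lambda' n^{-\alpha}$ with $\lambda'\neq0$, so that $\lim_{n\to\infty}n^\alpha\Delta^k f(n)=c\lambda'/\log b\neq0$, which is exactly the defining condition for class $C_{k,\alpha}$.

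The key steps, in order, are: (1) isolate the asymptotic expansion of $f(n)$ above; (2) handle the error term — here I need the hypothesis $a_n\sim\lambda n^\gamma e^{cn^\beta}$ to be upgraded to something that behaves well under differencing. The bare asymptotic $a_n\sim(\cdots)$ only controls $f(n)$ up to $o(1)$, and an $o(1)$ error need not have small differences, so strictly this should be read as ``$a_n$ is eventually given by an expression of this form with a smooth error,'' or one invokes Remark \ref{rem:ck}(4) applied to the smooth function $g(x)=\log_b\lambda+\gamma\log_b x+(c/\log b)x^\beta$ and argues the discrete correction is negligible; (3) compute $g^{(k)}(x)=\gamma\log_b e\cdot(-1)^{k-1}(k-1)!\,x^{-k}+(c/\log b)\beta(\beta-1)\cdots(\beta-k+1)x^{\beta-k}$, multiply by $x^\alpha=x^{k-\beta}$, and take the limit: the first term $\to0$ because $-k+\alpha=-\beta<0$, and the second term $\to(c/\log b)\beta(\beta-1)\cdots(\beta-k+1)=:\lambda\neq0$; (4) transfer from $g^{(k)}$ to $\Delta^k f$ via the integral representation $(*)$ in Remark \ref{rem:ck}(4) together with the bound on the error term from step (2); (5) conclude $\{f(n)\}\in C_{k,\alpha}$ and apply Theorem \ref{thm:main1}(i).

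The main obstacle is step (2): the statement is phrased with ``$\sim$,'' which is a priori too weak to control iterated differences, so the real content of the proof is establishing that the error in the asymptotic expansion of $f(n)$ contributes $o(n^{-\alpha})$ to $\Delta^k f(n)$. I expect one either strengthens the hypothesis implicitly (treating $a_n$ as a restriction of a smooth function with a genuinely small remainder — which is what happens in the intended applications to $p(n)$ and modular form coefficients, where precise asymptotics with power-saving error terms are available) or argues directly that for the relevant class of ``slowly varying'' errors the differencing still behaves well. Everything after that is the routine derivative computation in step (3)–(4), which I would not write out in full; the nonvanishing of $\lambda$ is the one place where the hypothesis ``$\beta$ not an integer'' is essential and should be flagged explicitly.
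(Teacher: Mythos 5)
Your outline and the derivative computation match the paper's proof almost exactly, but the one step you yourself flag as problematic --- handling the $o(1)$ error coming from the hypothesis $a_n\sim\lambda n^\gamma e^{cn^\beta}$ --- is resolved in the paper in a way that differs from both of the options you propose, and neither of your options would close the gap. You suggest either (a) implicitly strengthening the hypothesis to a smooth remainder, or (b) proving that the error contributes $o(n^{-\alpha})$ to $\Delta^k f(n)$. Option (a) changes the statement, and option (b) is unprovable from the bare ``$\sim$'': a sequence that is $o(1)$ can have $k$-th differences that are far larger than $n^{-\alpha}$, so under the stated hypothesis $\{f(n)\}$ need not belong to $C_{k,\alpha}$ at all, and the claim ``the real content is showing the error differences to $o(n^{-\alpha})$'' sets up a subgoal that cannot be reached.

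The paper's fix is to never difference the error. Lemma \ref{lem:asympBenford} shows that if $a_n\sim a_n^*$ then $\{a_n\}$ is locally Benford distributed of order $k$ if and only if $\{a_n^*\}$ is; this follows from the elementary fact (Lemma \ref{lem:asympudmod1}) that uniform distribution modulo $1$ is preserved under perturbations tending to $0$, applied to each linear combination $f_{\bft}(n)$. One therefore replaces $a_n$ by the exact expression $\lambda n^\gamma e^{cn^\beta}$ at the level of the \emph{conclusion} (the local Benford property), not at the level of the class membership, and only then verifies membership in $C_{k,\alpha}$ for the smooth model via Remark \ref{rem:ck}(4) --- which is exactly your steps (3)--(5), carried out correctly, including the observation that ``$\beta$ not an integer'' is what makes the limit constant nonzero. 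So the missing idea is purely one of ordering: the comparison between $a_n$ and its asymptotic model must be made after passing to the uniform-distribution statement, where only $f(n)-g(n)\to0$ is needed, rather than before, where control of $\Delta^k(f-g)$ would be required.
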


In particular, since the partition function $p(n)$ satisfies a relation
of the form \eqref{eq:cor1} with $\beta =1/2$, the corollary shows that
$\{p(n)\}$ has maximal local Benford order $\lceil 1/2\rceil = 1$ 
with respect to any base
$b\ge 2$. Thus, while the leading digits of the partition function are
Benford distributed (as previously shown in \cite{anderson2011}), 
pairs of leading digits of consecutive terms do \emph{not} behave like
independent Benford distributed digits.

The next corollary concerns a very wide class of functions introduced in
\cite{masse2015} (see Definition 3.4 and Theorem 3.10 of \cite{kuipers})
that includes, for
example, sequences of geometric growth such as $\{2^n\}$ and the
Fibonacci sequence $\{F_n\}$, and sequences of ``super-geometric'' growth
such as $\{a^{n^d}\}$, as well as the sequence of factorials   
and similar functions such as $\{n^n\}$. 

\begin{cor}
\label{thm:cor2}
Let $b$ be an integer base $\ge 2$ and suppose 
\begin{equation}
\label{eq:cor2}
a_n\sim \lambda n^{P(n)} b^{Q(n)},
\end{equation}
where $P$ and $Q$ are polynomials and $\lambda>0$.
\begin{itemize}
\item[(i)] If $\deg(P) < \deg(Q)$ and $Q$ has irrational leading
coefficient, then $\{a_n\}$ has maximal local Benford order $k=\deg(Q)$
with respect to base $b$.

\item[(ii)] If $\deg(P)\ge \deg(Q)$ and $P$ is nonconstant, then 
$\{a_n\}$ has maximal local Benford order $k=\deg(P)$ with respect to base
$b$.
\end{itemize}
\end{cor}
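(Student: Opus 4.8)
The plan is to reduce everything to Theorem~\ref{thm:main1}(i). Write $a_n'=\lambda\,n^{P(n)}b^{Q(n)}$, the ``model'' for $\{a_n\}$; the hypothesis $a_n\sim a_n'$ says exactly that
\[
\varepsilon(n):=\log_b a_n-\log_b a_n'\longrightarrow 0 .
\]
I will first show that $\{\log_b a_n'\}$ lies in one of the classes $\cki$ of Definition~\ref{def:ck}, with $k=\deg Q$ in case~(i) and $k=\deg P$ in case~(ii), so that Theorem~\ref{thm:main1}(i) determines the maximal local Benford order of $\{a_n'\}$; and then transfer this to $\{a_n\}$, using that the maximal local Benford order is unaffected when $\{\log_b a_n\}$ is perturbed by an $o(1)$ sequence.

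\emph{Step 1: placing the model in a class $\cki$.} Here $g(n):=\log_b a_n'=\log_b\lambda+P(n)\log_b n+Q(n)$ extends to a function that is infinitely differentiable on $[1,\infty)$, so by Remark~\ref{rem:ck}(4) it suffices to check the continuous analogue of the defining condition, i.e.\ to compute the limit of $g^{(k)}$ (resp.\ of $x\,g^{(k+1)}$) rather than of iterated differences. A routine Leibniz computation gives, for $m>j$, $\frac{d^{m}}{dx^{m}}(x^{j}\log x)=O(x^{j-m})$, and in the case $m=j+1$ one has $\frac{d^{j+1}}{dx^{j+1}}(x^{j}\log x)=j!/x$ exactly. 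In case~(i), $\deg P<\deg Q=k$, so $(P(x)\log x)^{(k)}\to0$ while $Q^{(k)}(x)\equiv k!\,q$, where $q\notin\QQ$ is the leading coefficient of $Q$; hence $g^{(k)}(x)\to k!\,q=:\theta\notin\QQ$, so $\{g(n)\}$ is of class $\ckz$ with $k=\deg Q$. In case~(ii), $k:=\deg P\ge\deg Q$ and $k\ge1$, so $Q^{(k+1)}\equiv0$ and $(P(x)\log x)^{(k+1)}\sim c\,k!/x$ with $c\neq0$ the leading coefficient of $P$; hence $x\,g^{(k+1)}(x)\to c\,k!/\log b\neq0$, so $\{g(n)\}$ is of class $\cko$ with $k=\deg P$. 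In either case Theorem~\ref{thm:main1}(i) shows that $\{a_n'\}$ has maximal local Benford order exactly $k$.

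\emph{Step 2: transferring to $\{a_n\}$.} By Definition~\ref{def:localBenford}, a sequence $\{c_n\}$ is locally Benford distributed of order $m$ precisely when the $\RR^{m}$-valued sequence $\mathbf{x}_n:=(\log_b c_n,\dots,\log_b c_{n+m-1})$ is uniformly distributed modulo~$1$ (convergence of the frequencies in \eqref{eq:defLocalBenford} over all boxes anchored at the origin is equivalent, by inclusion--exclusion, to uniform distribution modulo~$1$). Applying this to $\{a_n\}$ and to $\{a_n'\}$, the two tuple-sequences differ by $(\varepsilon(n),\dots,\varepsilon(n+m-1))\to\bfo$. Writing $e(x)=e^{2\pi ix}$, Weyl's criterion gives, for each $\mathbf m\in\ZZkz$, that the two corresponding exponential sums differ by at most $\frac1N\sum_{n\le N}\bigl|e(\mathbf m\cdot(\varepsilon(n),\dots,\varepsilon(n+m-1)))-1\bigr|$, the Ces\`aro average of a null sequence, hence $o(1)$. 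Thus one tuple-sequence is uniformly distributed modulo~$1$ iff the other is, so $\{a_n\}$ and $\{a_n'\}$ are locally Benford of exactly the same orders, and in particular share the same maximal local Benford order, identified in Step~1 as $\deg Q$ in case~(i) and $\deg P$ in case~(ii). (In case~(i) we tacitly have $\deg Q\ge1$: if $\deg Q=0$ then $P\equiv0$ and $a_n$ tends to a constant, so $\{a_n\}$ is not even Benford distributed and no maximal order is being asserted.)

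\emph{The main obstacle.} The one point requiring care is that the classes $\cki$ are defined through iterated differences and are \emph{not} stable under $o(1)$ perturbations, so one cannot simply verify that $\{\log_b a_n\}\in\cki$ and quote Theorem~\ref{thm:main1} directly. The crux is therefore to push the ``$\sim$'' error down to the level of uniform distribution modulo~$1$, where Weyl's criterion renders it harmless; the same mechanism is what will let us treat the ``$\sim$'' hypothesis in Corollary~\ref{thm:cor1} as well.
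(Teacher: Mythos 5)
Your proposal is correct and follows essentially the same route as the paper: reduce to the model sequence via the asymptotic-equivalence transfer (your Step~2 simply re-derives the paper's Lemma~\ref{lem:asympBenford} from Weyl's criterion instead of citing it), then use Remark~\ref{rem:ck}(4) to place $\log_b a_n'$ in class $\ckz$ (case (i)) or $\cko$ (case (ii)) by computing continuous derivatives, and invoke Theorem~\ref{thm:main1}(i). The derivative computations and the resulting identification $k=\deg Q$ resp.\ $k=\deg P$ match the paper's proof exactly.
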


We mention some particular cases of this result:
\begin{ex}
\label{ex:cor2}
\mbox{}
\begin{enumerate}
\item[(1)] The sequence
$\{n^n\}$ satisfies \eqref{eq:cor2} with $\lambda=1$, $P(n)=n$ and
$Q(n)=0$, so by part (ii) of the corollary this sequence has 
maximal local Benford order $1$.  

\item[(2)]  By Stirling's formula we have $n!\sim \sqrt{2\pi} n^{n+1/2}
e^{-n}$, so the factorial sequence $\{n!\}$ satisfies \eqref{eq:cor2}
with $P(n)=n+1/2$ and $Q(n)=-(1/\log b) n$. 
Thus, part (ii) of the corollary applies again and yields that $\{n!\}$
has maximal local Benford order $1$.  

\item[(3)] 
By Binet's Formula, we have $F_n\sim (1/\sqrt{5})\Phi^n$, where
$\Phi=(\sqrt{5}+1)/2$, so the Fibonacci sequence $\{F_n\}$ satisfies
\eqref{eq:cor2} with $P(n)=0$ and $Q(n)=(\log_b \Phi) n$.
The leading coefficient of $Q$, $\log_b (\sqrt{5}+1)/2$, is irrational
for any integer $b\ge 2$, so by part (i) of the corollary $\{F_n\}$
has maximal local Benford order $1$ with respect to any base $b\ge2$.

\item[(4)]
The sequence $\{2^{n^d}\}$, where $d$ is a positive integer, satisfies 
\eqref{eq:cor1} with $P(n)=0$ and $Q(n)=(\log_b2) n^d$.
By part (i) of the corollary it follows that this sequence has 
maximal local Benford order $d$ with respect to any base $b$ such that
$\log_b 2$  is irrational, i.e., any base $b$ that is not a power of $2$.

\end{enumerate}
\end{ex}

The above special cases include the sequences
in  Figures \ref{fig:global} and \ref{fig:local} 
Table \ref{table:leadingdigits}, and they allow us to resolve 
the question on the degree of ``local randomness'' in these sequences we
had posed in the introduction: 
By Corollary \ref{thm:cor1} the sequence $\{p(n)\}$ has maximal local Benford
order $1$.  By Example \ref{ex:cor2}, the sequences $\{n!\}$ and
$\{2^n\}$ have maximal 
local Benford order $1$, while the sequence  $\{2^{n^2}\}$
has maximal local Benford order $2$.  Thus, pairs of leading digits of consecutive
terms of $\{2^{n^2}\}$ behave like independent Benford-distributed random
variables, while this is not the case for the sequences $\{2^n\}$, 
$\{n!\}$, and $\{p(n)\}$. In this sense, the sequence $\{2^{n^2}\}$ is   
the most ``random'' of the four sequences.

\begin{cor}
\label{thm:cor3}
For any positive integer $n$, the generalized superfactorial
sequence (see Definition \ref{eq:iteratedprod}), 
$\{n!^{(h)}\}$, 
has maximal local Benford order $h$ with
respect to any base $b\ge 2$.
\end{cor}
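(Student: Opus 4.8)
The plan is to identify which of the classes $\cki$ of Definition \ref{def:ck} contains the sequence $\{\log_b(n!)\}$, and then to feed the answer into Theorem \ref{thm:main1}(ii) applied to $a_n=n!$; by Example \ref{ex:iteratedprod}(1) the resulting iterated product sequences $\{a_n^{(h)}\}$ are exactly the generalized superfactorials $\{n!^{(h)}\}$. Writing $f(n)=\log_b(n!)$, the first step is to record the exact identity
\[
\Delta f(n)=\log_b\bigl((n+1)!\bigr)-\log_b(n!)=\log_b(n+1),
\]
which needs no appeal to Stirling's formula.

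Next I would apply the difference operator once more to obtain, again exactly,
\[
\Delta^2 f(n)=\log_b(n+2)-\log_b(n+1)=\log_b\!\Bigl(1+\frac1{n+1}\Bigr),
\]
so that
\[
n\,\Delta^2 f(n)=n\log_b\!\Bigl(1+\frac1{n+1}\Bigr)\longrightarrow \frac1{\log b}\qquad (n\to\infty).
\]
Since $1/\log b\neq0$, Definition \ref{def:ck}(iii) (with $k=1$ and $\lambda=1/\log b$) shows that $\{f(n)\}$ is of class $\cko$ with $k=1$, and Remark \ref{rem:ck}(2) guarantees that this is the only class, and $k=1$ the only value of $k$, for which membership can hold. (This is consistent with Corollary \ref{thm:cor2}(ii) applied to $n!\sim\sqrt{2\pi}\,n^{n+1/2}e^{-n}$, where $P(n)=n+1/2$ has degree $1$.)

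With $k=1$ established, I would invoke Theorem \ref{thm:main1}(ii) for the sequence $a_n=n!$: for every positive integer $h$, the iterated product sequence $\{a_n^{(h)}\}=\{n!^{(h)}\}$ has maximal local Benford order $k+h-1=h$ with respect to base $b$. Since $b\ge2$ is arbitrary, this is the claimed result.

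I do not expect a genuine obstacle in this argument: everything reduces to the elementary limit computation above together with the bookkeeping already carried out in Theorem \ref{thm:main1}(ii). The only place that calls for a little care is ensuring that $\{\log_b(n!)\}$ belongs to exactly one of the classes $\cki$ and that the associated $k$ is unambiguously $1$ --- which is exactly what Remark \ref{rem:ck}(2) delivers once the limit has been checked.
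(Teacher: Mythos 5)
Your proof is correct, and it reaches the same endpoint as the paper --- feeding $k=1$ into Theorem \ref{thm:main1}(ii) --- but by a genuinely more direct route. The paper's proof simply cites Corollary \ref{thm:cor2} (via Example \ref{ex:cor2}(2), i.e.\ Stirling's formula and the asymptotic-replacement Lemma \ref{lem:asympBenford}) to get that $\{n!\}$ has maximal local Benford order $1$, and then invokes Theorem \ref{thm:main1}(ii). You instead verify the hypothesis of Theorem \ref{thm:main1} exactly: $\Delta \log_b(n!)=\log_b(n+1)$, hence $\Delta^2\log_b(n!)=\log_b\bigl(1+\tfrac1{n+1}\bigr)$ and $n\,\Delta^2\log_b(n!)\to 1/\log b\neq 0$, so $\{\log_b(n!)\}\in C_{1,1}$ with no appeal to Stirling. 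This buys you something real: Theorem \ref{thm:main1}(ii) is stated under the hypothesis that $\{\log_b a_n\}$ \emph{itself} lies in some class $\cki$, whereas the paper's chain of citations, read literally, only establishes class membership for the Stirling approximation of $n!$ (Lemma \ref{lem:asympBenford} transfers local Benford order between asymptotically equivalent sequences, but not class membership, and the sums defining the iterated products of two asymptotically equivalent sequences need not remain asymptotically equivalent). Your exact computation closes that small gap cleanly. The only superfluous step on your side is the appeal to Remark \ref{rem:ck}(2) for uniqueness of the class: membership in $C_{1,1}$ alone is all Theorem \ref{thm:main1}(ii) requires, since the theorem's conclusion already pins down the order.
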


In all of the above examples the sequence has finite maximal local
Benford order. One can ask if there exist sequences that have infinite
local Benford order, i.e., sequences that are locally Benford distributed
for any order $k$.  The above results suggest that the order of local
Benford distribution is closely related to the rate of growth of 
the sequence $\log_b a_n$. For example, if $\log_b a_n$ is a polynomial 
of degree $d$ with irrational leading coefficient, then $\{a_n\}$ 
has maximal local Benford order $d$.  Hence one might expect that 
sequences for which $\log_b a_n$ grows at exponential rate
``typically'' will have infinite maximal local Benford order.  The following
result confirms this by showing that, in some sense, almost all sequences
for which $\log_b a_n$ grows at an exponential rate have infinite local
Benford order. 

\begin{thm}[Local Benford order of doubly exponential sequences]
\label{thm:main2}
Let $a>1$ be a real number.
\begin{itemize}
\item[(i)]
For almost all real numbers  $\theta>1$ the sequence $\{a^{\theta^n}\}$
has infinite maximal local Benford order with respect to any base $b\ge 2$.
\item[(ii)] 
If $\theta>1$ is an algebraic number of degree $k$,
then, with respect to any base $b$, 
the maximal local Benford order of the sequence $\{a^{\theta^n}\}$ 
is at most $k$.
\end{itemize}
\end{thm}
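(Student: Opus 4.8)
The plan is to reduce both parts to the behaviour of the Weyl exponential sums attached to the relevant tuples of fractional parts, and then to invoke an explicit algebraic identity for part (ii) and a classical metric theorem on uniform distribution modulo $1$ for part (i). Throughout set $c=\log_b a>0$, so $\log_b a_n=c\,\theta^n$ and, for each $k\ge1$, the tuple $(\log_b a_n,\dots,\log_b a_{n+k-1})$ equals $c\,\theta^n(1,\theta,\dots,\theta^{k-1})$. By Definition~\ref{def:localBenford} and Weyl's criterion, $\{a^{\theta^n}\}$ is locally Benford distributed of order $k$ with respect to base $b$ if and only if, for every $\bfh=(h_0,\dots,h_{k-1})\in\ZZkz$,
\[
S_N(\theta;\bfh):=\frac1N\sum_{n\le N}e^{2\pi i\,c\,\theta^n P_{\bfh}(\theta)}\longrightarrow 0\qquad(N\to\infty),
\]
where $P_{\bfh}(x)=\sum_{i=0}^{k-1}h_i x^i$. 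Both parts are proved by analysing these sums.

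Part (ii) is immediate. Let $\theta$ be algebraic of degree $k$, and let $m(x)=c_kx^k+\dots+c_0$ be a nonzero integer multiple of its minimal polynomial over $\QQ$, so $c_k\ne0$, every $c_i\in\ZZ$, and $m(\theta)=0$. Taking $\bfh=(c_0,\dots,c_k)\in\ZZ^{k+1}\setminus\{\bfo\}$ in the criterion above (with $k$ replaced by $k+1$) gives $\sum_{i=0}^k c_i\log_b a_{n+i}=c\,\theta^n m(\theta)=0$ for every $n$, so $S_N(\theta;\bfh)=1$ for all $N$ and does not tend to $0$. Hence $\{a^{\theta^n}\}$ is not locally Benford distributed of order $k+1$; since local Benford distribution of a given order implies it of all smaller orders (see the remarks following Definition~\ref{def:localBenford}), the maximal local Benford order of $\{a^{\theta^n}\}$ is at most $k$. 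The argument does not depend on the base, so this holds for every $b\ge2$.

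For part (i) it suffices to show that the set of $\theta>1$ for which $S_N(\theta;\bfh)\not\to0$ for some base $b$, some $k$, and some $\bfh\in\ZZkz$ is Lebesgue-null; being a countable union over $b$, $k$, and $\bfh$, this reduces to the metric claim: for every nonzero real $\mu$ and every nonzero polynomial $g$, the sequence $(\mu\,g(\theta)\,\theta^n)_{n\ge1}$ is uniformly distributed modulo $1$ for almost every $\theta>1$ (which one applies with $\mu=c=\log_b a$ and $g=P_{\bfh}$). To prove the claim, fix a compact interval $J=[1+\delta,T]\subset(1,\infty)$, delete the finitely many zeros of $g$ in $J$, and split the remainder into finitely many subintervals $J'$ on each of which $g$ is sign-definite with $|g|\ge\rho>0$. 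On $J'$, write $u_n(\theta)=\mu\,g(\theta)\,\theta^n$, so $u_n'(\theta)=\mu\,\theta^{n-1}\bigl(\theta g'(\theta)+n g(\theta)\bigr)$. Using that $\theta\ge1+\delta>1$ (so $\theta^{n-1}$ grows geometrically) and that for large $n$ the term $n g(\theta)$ dominates $\theta g'(\theta)$ in $u_n'$, while $n(n-1)g(\theta)\theta^{n-2}$ dominates in $u_n''$, one checks that there is an $n_0$ such that for all $n>m\ge n_0$ the difference $u_n'-u_m'$ has constant sign on $J'$ and satisfies $|u_n'-u_m'|\ge\lambda>0$ there uniformly. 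These are exactly the hypotheses of the classical metric theorem on uniform distribution modulo $1$ (see \cite{kuipers}; Koksma's theorem that $\{\theta^n\}$ is u.d.\ for a.e.\ $\theta>1$ is a special case), so $(u_n(\theta))_{n\ge n_0}$---hence $(u_n(\theta))_{n\ge1}$---is u.d.\ modulo $1$ for a.e.\ $\theta\in J'$. Unioning the exceptional null sets over the finitely many $J'$, the deleted zeros, and then over the countably many $J$, bases $b$, and pairs $(k,\bfh)$ shows that for a.e.\ $\theta>1$ every $S_N(\theta;\bfh)$ tends to $0$, whence $\{a^{\theta^n}\}$ is locally Benford distributed of every order with respect to every base, i.e.\ has infinite maximal local Benford order.

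The crux is the metric claim in part (i). The classical theorem does not apply on all of $(1,\infty)$ because $P_{\bfh}$ may vanish or change sign, so one must localize to subintervals where $g=P_{\bfh}$ is sign-definite and bounded away from $0$, and there establish the monotonicity and uniform lower bound for $u_n'-u_m'$ with constants independent of $n$ and $m$; this reduces to the two elementary dominance facts above, but the bookkeeping---separating the bounded-gap and large-gap cases in $(n,m)$ and controlling the lower-order contributions of $\theta g'(\theta)$ and $g''(\theta)$---is where the care is needed. Part (ii), by contrast, uses nothing beyond the single relation $m(\theta)=0$.
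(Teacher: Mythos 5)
Your part (ii) is exactly the paper's argument: the integer relation $m(\theta)=0$ makes the linear form $\sum_{i=0}^k c_i\log_b a_{n+i}$ vanish identically, so it cannot be uniformly distributed modulo $1$, and the order-monotonicity remark caps the maximal order at $k$. Your part (i), however, takes a genuinely different route. The paper reduces infinite maximal local Benford order to \emph{complete} uniform distribution of $\{\alpha\theta^n\}$ (Lemma \ref{lem:local-benford-complete-udmod1}) and then simply cites Franklin's Theorem 15 for the metric statement. You instead unwind the multidimensional Weyl criterion into the countable family of one-dimensional sequences $c\,P_{\bfh}(\theta)\theta^n$ and prove the needed metric claim directly from the Koksma-type metric theorem (\cite[Ch.\ 1, Thm.\ 4.3]{kuipers}), localizing to compact subintervals where $P_{\bfh}$ is bounded away from zero so that the hypotheses on $u_n'-u_m'$ (uniform lower bound and monotonicity) can be verified. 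This buys self-containedness at the cost of the dominance bookkeeping you flag at the end; that step does go through --- e.g.\ for $n>m$ the main term of $u_n'-u_m'$ is $\mu g(\theta)\bigl(n\theta^{n-1}-m\theta^{m-1}\bigr)\ge \rho\,\theta^{n-2}\bigl(n(\theta-1)+1\bigr)$, which swamps the $O(\theta^n)$ contribution of $g'$ once $n\ge n_0(J',g,\delta,\rho)$, and the analogous estimate for $u_n''-u_m''$ gives sign-definiteness, so the gap-case analysis you worry about is not actually needed --- but as written it is a sketch rather than a proof. One small caveat: the metric theorem then yields uniform distribution of $(u_n)_{n\ge n_0}$, and you correctly note that discarding finitely many initial terms is harmless. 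Both approaches are valid; the paper's is a two-line citation, yours is longer but makes the measure-zero exceptional set completely explicit as a countable union over $b$, $k$, $\bfh$, and the localizing intervals.
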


\begin{ex}
\label{ex:main2}
\mbox{}
\begin{itemize}
\item[(1)]
The sequence $\{2^{2^n}\}$ satisfies the conditions of Theorem
\ref{thm:main2}(ii) with $\theta=2$. Since $2$ is algebraic
of degree $1$, the theorem shows that
the sequence cannot be locally Benford
distributed of order $2$ (or greater).   (Whether
the sequence is locally Benford distributed of order $1$, i.e., whether
it satisfies Benford's Law, remains an open question.)

\item[(2)]
By Binet's Formula, we have
\begin{equation}
\label{eq:2fn}
2^{F_n} \sim 2^{(1/\sqrt{5})\Phi^n}, 
\end{equation}
where $F_n$ denotes the $n$-th Fibonacci number and $\Phi=(\sqrt{5}+1)/2$. 
Since $\Phi$ is algebraic of degree $2$, the theorem can 
be applied to the sequence on the right of \eqref{eq:2fn}, and it shows that   
this sequence cannot be locally Benford distributed of order $3$ (or
greater).  In view of the relation \eqref{eq:2fn}
the same is true for the sequence $\{2^{F_n}\}$
(cf. Lemma \ref{lem:asympBenford} below). 
\end{itemize}
\end{ex}



\section{Uniform distribution modulo 1 and the Key Proposition}
\label{sec:uniform-distribution}

In this section, we introduce some key concepts and results from the
theory of uniform distribution modulo $1$, and we use these to reduce
Theorem \ref{thm:main1} to a statement about uniform distribution modulo
$1$, Proposition \ref{prop:key} below. The proposition will be proved in the
next section. 

\subsection{Uniform distribution modulo $1$ in $\RRk$}

We recall the standard definition of uniform distribution modulo $1$ of
sequences of real numbers, and its higher-dimensional analog; see, for
example,  Definitions 1.1 and 6.1 in Chapter 1 of 
\cite{kuipers}\footnote{In
\cite{kuipers} these definitions are given in a slightly different,
though equivalent, form, with the one-sided constraints $\{f_i(n)\}\le \alpha_i$ 
replaced by two-sided constraints $\beta_i\le \{f_i(n)\}<\gamma_i$, 
where $0\le \beta_i<\gamma_i\le 1$. 
The equivalence of the two versions is easily seen by taking linear
combinations of the quantities in \eqref{eq:defudmod1RRk} with 
$\alpha_i\in\{\beta_i,\gamma_i\}$.}.

\pagebreak[3]

\begin{defn}[Uniform distribution modulo $1$ in $\RRk$]
\label{def:udmod1}
\mbox{}
\begin{itemize}
\item[(i)] A sequence $\{f(n)\}$ of real numbers is 
said to be \emph{uniformly distributed modulo $1$} if 
\begin{equation}
\label{eq:defudmod1}
\lim_{N\to\infty} \frac1N \#\{n\le N: \{f(n)\} \le \alpha\}=\alpha
\quad (0\le \alpha\le 1).
\end{equation}
(Recall that $\{x\}$ denotes the fractional part of $x$.)
\item[(ii)]
A sequence $\{\bff(n)\}=\{(f_0(n),\dots,f_{k-1}(n))\}$ in $\RRk$
is said to be \emph{uniformly distributed modulo $1$ in $\RRk$} if 
\begin{align}
\label{eq:defudmod1RRk}
&\lim_{N\to\infty} \frac1N\# \{n\le N: \{f_i(n)\}\le \alpha_i \
(i=0,1,\dots,k-1)\} 
\\
\notag
&\qquad
= \alpha_0\alpha_1\dots \alpha_{k-1}
\quad ((\alpha_0,\alpha_1\dots,\alpha_{k-1})\in [0,1]^k).
\end{align}
\end{itemize}
\end{defn}

A key result in uniform distribution modulo $1$ is \emph{Weyl's
Criterion}, which we will state in the following form; see 
Theorems 2.1, 6.2, and 6.3 in Chapter 1 of \cite{kuipers}.

\begin{lem}[Weyl's Criterion in $\RRk$]
\label{lem:weyl}
\mbox{}
\begin{itemize}
\item[(i)] A sequence $\{f(n)\}$ of real numbers is 
uniformly distributed modulo $1$ if and only if 
\begin{equation}
\label{eq:weyl}
\lim_{N\to\infty} \frac1N\sum_{n=1}^N e^{2\pi i t f(n)} =0
\quad (t\in \ZZ\setminus\{0\}).
\end{equation}
\item[(ii)]
A sequence $\{\bff(n)\}=\{(f_0(n),\dots,f_{k-1}(n))\}$ in $\RRk$
is uniformly distributed modulo $1$ in $\RRk$ if and only if 
\begin{align}
\label{eq:weylRRk}
\lim_{N\to\infty} \frac1N&\sum_{n=1}^N \exp\left\{
2\pi i (t_0f_0(n)+\dots+t_{k-1}f_{k-1}(n))\right\} =0
\\
\notag
& \qquad \quad ((t_0,\dots,t_{k-1})\in \ZZkz).
\end{align}
\item[(iii)]
A sequence $\{\bff(n)\}=\{(f_0(n),\dots,f_{k-1}(n))\}$ in $\RRk$
is uniformly distributed modulo $1$ in $\RRk$ if and only if, for any 
vector $(t_0,\dots,t_{k-1})\in \ZZkz$, 
the sequence $\{t_0f_0(n)+\dots + t_{k-1}f_{k-1}(n)\}$ is 
uniformly distributed modulo $1$.
\end{itemize}
\end{lem}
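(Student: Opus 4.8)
The plan is to derive parts (i)--(iii) from the classical Weierstrass/Stone--Weierstrass approximation theorem on the torus, by reducing the counting conditions \eqref{eq:defudmod1} and \eqref{eq:defudmod1RRk} to the convergence of averages of trigonometric test functions. I would prove (i) in full, observe that (ii) is the same argument carried out on $\RR^k/\ZZ^k$ in place of $\RR/\ZZ$, and then obtain (iii) as a formal consequence of (i) and (ii). The workhorse statement is the equivalence, for a real sequence $\{f(n)\}$, between uniform distribution modulo $1$ and the convergence
\[
\frac1N\sum_{n=1}^N g(\{f(n)\})\longrightarrow \int_0^1 g(x)\,dx\qquad(N\to\infty)
\]
for every continuous $1$-periodic function $g$. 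The forward implication here follows by squeezing $g$ between step functions $s_-\le g\le s_+$ with $\int_0^1(s_+-s_-)<\epsilon$; each step function is a finite linear combination of the indicators $\mathbf 1_{[0,\alpha]}$ occurring in \eqref{eq:defudmod1}, so its average converges to its integral, and a $\limsup$/$\liminf$ squeeze finishes it. The converse implication follows by sandwiching $\mathbf 1_{[0,\alpha]}$ between continuous $1$-periodic functions whose integrals differ by less than $\epsilon$.

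Granting this, part (i) is quick. For the forward direction, take $g(x)=e^{2\pi itx}$, whose integral over $[0,1]$ vanishes for $t\in\ZZ\setminus\{0\}$; this gives \eqref{eq:weyl}. For the converse, assume \eqref{eq:weyl}. Since $t$ is an integer, $e^{2\pi i t\{f(n)\}}=e^{2\pi i t f(n)}$, so by linearity $\frac1N\sum_{n\le N}P(\{f(n)\})\to\int_0^1 P$ for every trigonometric polynomial $P$ (the $t=0$ term contributes $\int_0^1 P$, the others vanish by \eqref{eq:weyl}). As trigonometric polynomials are uniformly dense in the continuous $1$-periodic functions (Weierstrass, or Fej\'er means), a three-$\epsilon$ argument extends this to all continuous $1$-periodic $g$, and the workhorse statement then yields \eqref{eq:defudmod1}.

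For part (ii) I would repeat this verbatim on the torus $\RR^k/\ZZ^k$: the test functions are the multivariate trigonometric polynomials $\sum_{\bft}c_{\bft}\exp(2\pi i\,\bft\cdot\mathbf x)$ (finite sums over $\bft\in\ZZk$), dense in $C(\RR^k/\ZZ^k)$ by the Stone--Weierstrass theorem; the indicator of a box $[0,\alpha_0]\times\dots\times[0,\alpha_{k-1}]$ is squeezed between continuous functions on the torus with integrals within $\epsilon$ (for instance, products of one-dimensional ramps); and $\int_{\RR^k/\ZZ^k}\exp(2\pi i\,\bft\cdot\mathbf x)\,d\mathbf x$ vanishes exactly when $\bft\ne\bfo$. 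This establishes the equivalence of \eqref{eq:defudmod1RRk} and \eqref{eq:weylRRk}. For part (iii), fix $\bft\in\ZZkz$ and apply part (i) to the scalar sequence $h_{\bft}(n):=t_0f_0(n)+\dots+t_{k-1}f_{k-1}(n)$: it is uniformly distributed modulo $1$ iff $\frac1N\sum_{n\le N}\exp(2\pi i s\,h_{\bft}(n))=\frac1N\sum_{n\le N}\exp(2\pi i\,(s\bft)\cdot\bff(n))\to0$ for every $s\in\ZZ\setminus\{0\}$. As $\bft$ runs over $\ZZkz$ and $s$ over $\ZZ\setminus\{0\}$, the vectors $s\bft$ run over all of $\ZZkz$ (the choice $s=1$ already hits every element), so ``$\{h_{\bft}(n)\}$ is uniformly distributed modulo $1$ for every $\bft\in\ZZkz$'' is literally the condition \eqref{eq:weylRRk}, hence equivalent to \eqref{eq:defudmod1RRk} by part (ii).

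The only step that is not pure bookkeeping is the approximation/sandwiching of a box indicator by trigonometric polynomials with a nearly matching integral, which I expect to be the main (though entirely standard) obstacle. I would handle it in two stages: first bound the indicator between continuous piecewise-linear ramp functions on the torus whose integrals differ by $O(\epsilon)$, then approximate those ramps uniformly by trigonometric polynomials (Fej\'er means or Stone--Weierstrass) and add constants $\pm\epsilon$ to restore the one-sided inequalities at the cost of only $O(\epsilon)$ in the integrals. Everything else --- linearity of the averages, the $\limsup$/$\liminf$ squeezes, and the three-$\epsilon$ density argument --- is routine.
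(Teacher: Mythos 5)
Your proposal is correct. Note, though, that the paper does not prove parts (i) and (ii) at all: it simply cites Theorems 2.1, 6.2, and 6.3 of Kuipers--Niederreiter, and the only argument it supplies is the one-line deduction of (iii) from (i) and (ii) --- namely, apply the one-dimensional criterion to the scalar sequence $\{t_0f_0(n)+\dots+t_{k-1}f_{k-1}(n)\}$ --- which is exactly your closing step, including the observation that the vectors $s\bft$ with $s\in\ZZ\setminus\{0\}$ and $\bft\in\ZZkz$ exhaust $\ZZkz$. What you add is a self-contained proof of (i) and (ii) via the standard Weierstrass/Stone--Weierstrass route (sandwich indicators by continuous periodic functions, pass to trigonometric polynomials by density, use $e^{2\pi i t\{f(n)\}}=e^{2\pi i tf(n)}$ for $t\in\ZZ$); this is sound and is essentially the textbook proof the paper is outsourcing. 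One small point worth a sentence in a written version: the definition \eqref{eq:defudmod1} uses closed conditions $\{f(n)\}\le\alpha$, so when you decompose step functions into indicators you should note that single points carry zero asymptotic density (the density of $\{n:\beta<\{f(n)\}\le\alpha\}$ is $\alpha-\beta$, which can be made arbitrarily small), so the distinction between $[0,\alpha]$ and $[0,\alpha)$ is harmless; the same remark applies to the boxes in \eqref{eq:defudmod1RRk}.
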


Part (iii) of this result is obtained by  combining the $k$-dimensional
Weyl criterion in (ii) with the one-dimensional Weyl criterion in (i),
applied to the sequence $\{t_0f_0(n)+t_1f_1(n)+\dots + t_{k-1}f_{k-1}(n)\}$.

We next state the special case of the $k$-dimensional Weyl's Criterion 
when $\bff$ is of the form
$\bff(n)=(f(n),f(n+1),\dots,f(n+k-1))$, where
$\{f(n)\}$ is a given sequence of real numbers.  
It will be convenient to introduce the notation 
\begin{equation}
\label{eq:ft}
f_{\bft}(n)=\sum_{i=0}^{k-1}t_i f(n+i),
\end{equation}
where $\bft=(t_0,t_1,\dots,t_k)$ is any $k$-dimensional vector.
By applying (iii) of Lemma \ref{lem:weyl} with 
$\bff(n)=(f(n),f(n+1),\dots,f(n+k-1))$, we obtain: 

\begin{cor}[Weyl's Criterion for $\{(f(n),f(n+1),\dots f(n+k-1))\}$]
\label{cor:weyl}
Let $\{f(n)\}$ be a sequence of real numbers, and let $k$ be a positive
real number.  Then $\{(f(n),f(n+1),\dots,f(n+k-1))\}$ is uniformly
distributed modulo $1$ in $\RRk$ if and only if, for each 
$\bft\in\ZZkz$,
the sequence $\{f_\bft(n)\}$ is uniformly distributed
modulo $1$.
\end{cor}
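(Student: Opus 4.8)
The plan is to obtain this as a direct specialization of part (iii) of Weyl's Criterion (Lemma \ref{lem:weyl}). First I would apply that lemma with the choice $\bff(n) = (f_0(n), \dots, f_{k-1}(n))$, where $f_i(n) := f(n+i)$ for $i = 0, 1, \dots, k-1$; then $\{\bff(n)\}$ is precisely the sequence $\{(f(n), f(n+1), \dots, f(n+k-1))\}$ appearing in the statement. Lemma \ref{lem:weyl}(iii) then says that $\{\bff(n)\}$ is uniformly distributed modulo $1$ in $\RRk$ if and only if, for every $\bft = (t_0, \dots, t_{k-1}) \in \ZZkz$, the real-valued sequence $\{\sum_{i=0}^{k-1} t_i f_i(n)\}$ is uniformly distributed modulo $1$.

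The only remaining step is a notational one: since $f_i(n) = f(n+i)$, the linear combination $\sum_{i=0}^{k-1} t_i f_i(n)$ equals $\sum_{i=0}^{k-1} t_i f(n+i)$, which is exactly the quantity $f_\bft(n)$ introduced in \eqref{eq:ft}. Substituting this identification into the equivalence from Lemma \ref{lem:weyl}(iii) yields the claimed statement verbatim. The one point worth stating carefully is the bookkeeping in the index shift: one should check that the exponential sum $\tfrac1N\sum_{n=1}^N \exp\{2\pi i \sum_{i=0}^{k-1} t_i f(n+i)\}$ — which governs the one-dimensional Weyl criterion applied to $\{f_\bft(n)\}$ — is literally the sum $\tfrac1N\sum_{n=1}^N \exp\{2\pi i(t_0 f_0(n)+\dots+t_{k-1} f_{k-1}(n))\}$ of \eqref{eq:weylRRk} under the substitution $f_i = f(\cdot + i)$; this is immediate once the notation \eqref{eq:ft} is fixed.

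I do not expect any genuine obstacle here: all the mathematical content — the nontrivial passage between uniform distribution of the vector sequence and uniform distribution of each of its integer linear combinations — has already been packaged into Lemma \ref{lem:weyl}(iii), so the corollary is a pure specialization. (I would also note in passing that the hypothesis is meant to read ``let $k$ be a positive integer'', as $\RRk$ and the tuple $(f(n),\dots,f(n+k-1))$ require integer $k$.)
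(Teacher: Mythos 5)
Your proof is correct and matches the paper's own derivation exactly: the paper obtains Corollary \ref{cor:weyl} precisely by applying Lemma \ref{lem:weyl}(iii) with $\bff(n)=(f(n),f(n+1),\dots,f(n+k-1))$ and identifying $\sum_{i=0}^{k-1}t_if(n+i)$ with $f_\bft(n)$ as in \eqref{eq:ft}. Your parenthetical observation that ``positive real number'' should read ``positive integer'' is also a correct catch of a typo in the statement.
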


\subsection{Local Benford distribution and uniform distribution modulo $1$}
The following lemma characterizes local Benford distribution in terms of
uniform distribution modulo $1$. 

\begin{lem}[Local Benford distribution and uniform distribution modulo
$1$]
\label{lem:localBenford}
Let $b$ be an integer base $\ge 2$, let $\{a_n\}$ be a sequence of 
positive real numbers, and let $f(n)=\log_b a_n$. 
\begin{itemize}
\item[(i)] The sequence $\{a_n\}$ is Benford distributed with respect to
base $b$ if and only if
the sequence $\{f(n)\}$ is uniformly distributed modulo $1$.

\item[(ii)] Let $k$ be a positive integer. 
The sequence  $\{a_n\}$ is locally Benford distributed of order $k$ with
respect to base $b$ if and only if, for each $\bft\in\ZZkz$, the sequence
$\{f_\bft(n)\}$ is uniformly distributed modulo $1$.

\end{itemize}
\end{lem}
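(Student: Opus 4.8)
The plan is to prove Lemma \ref{lem:localBenford} by reducing both parts to standard results about uniform distribution modulo $1$, using the equivalence between leading-digit statements and fractional-part statements already recorded in the discussion preceding Definition \ref{def:localBenford}. For part (i), observe that the defining condition \eqref{eq:defBenford} for $\{a_n\}$ to be Benford distributed is literally the condition \eqref{eq:defudmod1} that $\{f(n)\}=\{\log_b a_n\}$ be uniformly distributed modulo $1$; there is nothing to prove beyond unwinding the definitions. So the content is entirely in part (ii).

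For part (ii), I would first rewrite the defining relation \eqref{eq:defLocalBenford} for local Benford distribution of order $k$. Setting $f(n)=\log_b a_n$, the condition says exactly that the sequence of $k$-tuples $\{(f(n),f(n+1),\dots,f(n+k-1))\}$ satisfies \eqref{eq:defudmod1RRk}, i.e., is uniformly distributed modulo $1$ in $\RR^k$. Thus ``$\{a_n\}$ is locally Benford distributed of order $k$'' is equivalent to ``$\{(f(n),\dots,f(n+k-1))\}$ is uniformly distributed modulo $1$ in $\RR^k$.'' Then I would invoke Corollary \ref{cor:weyl}, which states precisely that the latter holds if and only if, for every $\bft\in\ZZkz$, the sequence $\{f_\bft(n)\}$ is uniformly distributed modulo $1$, where $f_\bft(n)=\sum_{i=0}^{k-1}t_i f(n+i)$ as in \eqref{eq:ft}. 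Chaining the two equivalences gives the claim.

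I expect no serious obstacle here: the lemma is essentially a bookkeeping translation, and all the analytic substance (Weyl's criterion in $\RR^k$, and its specialization to shifted tuples in Corollary \ref{cor:weyl}) is already available. The only point that requires a word of care is the passage between the one-sided cutoffs $\{f_i(n)\}\le\alpha_i$ used in Definition \ref{def:udmod1} and any two-sided box constraints — but this is exactly the equivalence noted in the footnote to that definition, obtained by taking linear combinations, so it can be cited rather than reproved. I would therefore write the proof as two short paragraphs: one sentence for (i), and for (ii) a two-step chain — first identifying \eqref{eq:defLocalBenford} with uniform distribution of the tuple $\{(f(n),\dots,f(n+k-1))\}$ in $\RR^k$ via Definition \ref{def:udmod1}(ii), then applying Corollary \ref{cor:weyl} to replace that by the stated condition on the one-dimensional sequences $\{f_\bft(n)\}$.
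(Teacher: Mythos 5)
Your proposal is correct and follows exactly the paper's own argument: part (i) is a restatement of the definitions, and part (ii) identifies \eqref{eq:defLocalBenford} with uniform distribution modulo $1$ of $\{(f(n),\dots,f(n+k-1))\}$ in $\RR^k$ and then applies Corollary \ref{cor:weyl}. No differences worth noting.
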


\begin{proof}
Assertion (i) is simply a restatement of the definition
\eqref{eq:defBenford} of a Benford distributed sequence.  

For the proof of (ii), we note that the definition
\eqref{eq:defLocalBenford} of a locally Benford distributed sequence 
sequence $\{a_n\}$ is exactly equivalent to the definition of uniform
distribution modulo $1$ of the $k$-dimensional sequence
$\{(f(n),f(n+1),\dots,f(n+k-1))\}$, with $f(n)=\log_b a_n$.
By Corollary \ref{cor:weyl} this in turn is equivalent to 
the uniform distribution modulo $1$ of all sequences $\{f_\bft(n)\}$, 
with $\bft\in\ZZkz$.
\end{proof}

\subsection{The Key Proposition}
We are now ready to recast our main result, Theorem \ref{thm:main1},
in terms of uniform distribution modulo $1$. 

\begin{prop}
\label{prop:key}
Let $k$ be a positive integer and let $\{f(n)\}$ be a sequence 
belonging to one of   the classes $\cki$ in Definition \ref{def:ck}.
Then:
\begin{itemize}
\item[(i)] For each $k$-dimensional vector 
$\bft\in\ZZkz$, the sequence $\{f_\bft(n)\}$
is uniformly distributed modulo $1$.
\item[(ii)] There exists a $(k+1)$-dimensional 
vector $\bft\in\ZZ^{k+1}\setminus\{0\}$ 
such that the sequence $\{f_\bft(n)\}$ 
is \emph{not} uniformly distributed modulo $1$.
\end{itemize}
\end{prop}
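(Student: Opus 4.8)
The plan is to work throughout with Weyl's Criterion (Lemma~\ref{lem:weyl}), writing $e(x)=e^{2\pi ix}$, and to organize part (i) as a double induction resting on one algebraic reduction. Given $\bft\in\ZZ^k\setminus\{\bfo\}$, Newton's forward-difference identity $f(n+i)=\sum_{j=0}^{i}\binom{i}{j}\Delta^{j}f(n)$ lets me rewrite $f_{\bft}(n)=\sum_{j=0}^{k-1}c_{j}\,\Delta^{j}f(n)$ with $c_{j}=\sum_{i=j}^{k-1}t_{i}\binom{i}{j}\in\ZZ$; the map $(t_{i})\mapsto(c_{j})$ is unit upper-triangular, so $(c_{j})\ne\bfo$, and there is a least index $j_{0}\le k-1$ with $c_{j_{0}}\ne0$. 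The crucial point is that the defining property of the classes $\cki$ is inherited under passage to higher iterated differences: $\Delta^{k}f(n)\to\theta$ forces $\Delta^{k+r}f(n)\to0$ for $r\ge1$, and similarly $\Delta^{k}f(n)\sim\lambda n^{-\alpha}$ forces $\Delta^{k+r}f(n)=o(n^{-\alpha})$, while $\Delta^{k+1}f(n)\sim\lambda/n$ forces $\Delta^{k+1+r}f(n)=o(1/n)$. Applying $\Delta^{k-j_{0}}$ (resp.\ $\Delta^{k+1-j_{0}}$ in the $\cko$ case) to $f_{\bft}$ and keeping only the surviving term $c_{j_{0}}\Delta^{k}f(n)$ (resp.\ $c_{j_{0}}\Delta^{k+1}f(n)$), I conclude that $f_{\bft}$ again lies in a class $C_{m,i}$, with the \emph{same} second subscript $i$ and with $m=k-j_{0}\in\{1,\dots,k\}$. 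So part (i) reduces to the assertion: \emph{every sequence in a class $C_{m,i}$ with $m\ge1$ is uniformly distributed modulo~$1$.}

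I would prove this by induction on $m$. For $m\ge2$ I would apply van der Corput's difference theorem: it suffices that $\{g(n+h)-g(n)\}$ be u.d.\ for each $h\ge1$, and the Newton expansion above (now with least nonzero index equal to $1$, since $g(n+h)-g(n)=\sum_{j=1}^{h}\binom{h}{j}\Delta^{j}g(n)$) shows $g(n+h)-g(n)\in C_{m-1,i}$, which is u.d.\ by the inductive hypothesis. The base case $m=1$ is the heart of the matter and splits according to $i$. For $C_{1,0}$, where $\Delta g(n)\to\theta\notin\QQ$, a direct exponential-sum estimate works: for a nonzero integer $t$ one has $e(tg(n))=e(t\Delta g(n-1))\,e(tg(n-1))$ with $e(t\Delta g(n-1))\to e(t\theta)\ne1$, so telescoping $S_{N}=\sum_{n\le N}e(tg(n))$ gives $(1-e(t\theta))S_{N}=O(1)+O\!\big(\sum_{n<N}|\Delta g(n)-\theta|\big)=o(N)$, whence $S_{N}/N\to0$. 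For $C_{1,\alpha}$ (here $g(n)\sim\frac{\lambda}{1-\alpha}n^{1-\alpha}$) and $C_{1,1}$ (here $g(n)\sim\lambda n\log n$), the assertion is a Fej\'er/van der Corput--type equidistribution statement, which I would obtain either by a counting-in-blocks argument exploiting the \emph{precise} asymptotics $\Delta g(n)\sim\lambda n^{-\alpha}$ (so that $g$ is eventually monotone and, on each unit interval of values, behaves like a fixed power or logarithm), or, in the $C_{1,1}$ case, by van der Corput's inequality together with the elementary bound $\sum_{n\le N}e(c\log n)=O\!\big(N/(1+|c|)\big)$, the slowly varying perturbation of $c\log n$ being absorbed by partial summation.

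This base case $m=1$ with $i\in\{\alpha,1\}$ is the step I expect to be the main obstacle: the classical Fej\'er and van der Corput theorems are stated under a monotonicity hypothesis on the relevant difference, whereas Definition~\ref{def:ck} gives only the asymptotic \emph{size} of that difference, so I anticipate needing somewhat more robust versions of those equidistribution criteria tailored to exactly the hypotheses defining $\cki$.

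For part (ii) I would exhibit an explicit witness. Take the $(k+1)$-dimensional vector $\bft=\big((-1)^{k}\binom{k}{0},(-1)^{k-1}\binom{k}{1},\dots,\binom{k}{k}\big)\in\ZZ^{k+1}\setminus\{\bfo\}$, so that $f_{\bft}(n)=\Delta^{k}f(n)$. In class $\ckz$ this converges to $\theta\notin\ZZ$, and in class $\cka$ it converges to $0$; a sequence that converges modulo~$1$ to a constant is not uniformly distributed, which settles (ii) in these two cases. In class $\cko$ one has $\Delta^{k}f(n)\sim\lambda\log n$, and --- since no $(k+1)$-term combination of the $f(n+i)$ can converge, because the growing lower-order differences $\Delta^{j}f$ ($j<k$) cannot all be cancelled --- one really must show that $\{\lambda\log n+o(\log n)\}$ is not u.d., the analogue of the classical non-equidistribution of $\{\log n\}$. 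Here $g=\Delta^{k}f$ is eventually monotone with $\Delta g(n)\sim\lambda/n$; along the subsequence $N_{j}$ of ``block endpoints'' (the largest $n$ with $g(n)$ below a given integer), a comparison of the relevant sums with integrals --- routine given $\Delta g(n)\sim\lambda/n$ --- shows that $\frac1{N_{j}}\sum_{n\le N_{j}}e(g(n))$ tends to the first Fourier coefficient of the limiting density proportional to $e^{u/\lambda}$ on $[0,1)$, which is nonzero. By Weyl's Criterion this proves $\{g(n)\}$, hence $f_{\bft}$, is not uniformly distributed modulo~$1$, completing part (ii).
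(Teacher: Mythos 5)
Your overall architecture matches the paper's: both reduce part (i) to the statement that every sequence in a class $C_{m,i}$ is uniformly distributed modulo $1$, prove that statement by induction on $m$ via van der Corput's difference theorem (using that $g(n+h)-g(n)$ drops the class index by one), and settle part (ii) with the identical witness $t_i=(-1)^{k-i}\binom{k}{i}$, so that $f_\bft=\Delta^k f$. Your algebraic reduction is genuinely different and rather clean: you pass from $(t_0,\dots,t_{k-1})$ to $(c_0,\dots,c_{k-1})$ via Newton's identity in one unimodular step and read off that $f_\bft\in C_{k-j_0,i}$, where $j_0$ is the least index with $c_{j_0}\ne 0$. The paper instead inducts on $k$, splitting on whether $\sum_i t_i$ vanishes (which is exactly your condition $c_0\ne 0$ versus $c_0=0$) and peeling off one difference at a time by rewriting $f_\bft=g_\bfs$ with $g=\Delta f$; the two reductions are equivalent, and yours arrives at the same destination faster. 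Your checks that the higher iterated differences are negligible ($\Delta^{k+r}f=o(n^{-\alpha})$ in class $\cka$, $\Delta^{k+1+r}f=o(1/n)$ in class $\cko$) are correct and are precisely what makes the one-step reduction legitimate.

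The genuine gap is the one you flag yourself: the base case $m=1$ for the classes $C_{1,\alpha}$ and $C_{1,1}$ is only sketched, and it is the analytic heart of the proposition. Your proposed routes are workable --- a Fej\'er-type counting argument for $C_{1,\alpha}$ (the asymptotic $\Delta g(n)\sim\lambda n^{-\alpha}$ does suffice to control the block lengths without a monotonicity hypothesis), and van der Corput's \emph{inequality} together with $\sum_{n\le N}n^{2\pi i c}=O(N/(1+|c|))$ for $C_{1,1}$ (the difference theorem itself fails there, since $g(n+h)-g(n)\sim h\lambda\log n$ is not equidistributed; the inequality works because the averaged Weyl sums decay like $1/h$) --- but neither is carried out, and the partial-summation step absorbing the $o(\log n)$ drift in the $C_{1,1}$ case needs to be written down. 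The paper closes this gap differently: for $\cka$ it sums $e^{2\pi i tf(n)}$ over blocks $[N,N+KN^\alpha)$, where the phase is linear up to $o(1)$, obtaining a geometric-sum saving of $2/(Kt\lambda)$ and letting $K\to\infty$; for $\cko$ it applies a \emph{discrete} second-difference van der Corput estimate on dyadic blocks, giving $O(N^{1/2})$ per block. Finally, for part (ii) in class $\cko$, your limiting-distribution argument is more than is needed and requires some care, since the hypothesis gives only $\Delta(\Delta^kf)(n)\sim\lambda/n$ and not $\Delta^kf(n)=\lambda\log n+C+o(1)$ (the block-endpoint ratios are controlled, but an $o(\log n)$ drift is present); the paper's argument is simpler and fully robust: on $[N,(1+\delta)N]$ the values $\Delta^kf(n)$ move by at most $\lambda\log(1+\delta)+o(1)<1/3$, so modulo $1$ they miss an interval of length $\ge 2/3$, which is incompatible with uniform distribution.
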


Franklin \cite{franklin1963} established a result of the above type 
in the case when $f(n)$ is a polynomial with irrational leading
coefficient.  It is easy to see that in this case we have
$\Delta^k f(n)=k! a_k$, where $k$ is the degree  of $f(n)$ and $a_k$ is its
leading coefficient, so $f(n)$ trivially belongs to the class $C_{k,0}$.
Thus, Proposition \ref{prop:key} can be viewed as a far-reaching
generalization of Franklin's result.

\subsection{Deduction of Theorem \ref{thm:main1} from Proposition
\ref{prop:key}}

Let $b$ be an integer base $\ge 2$, and let $\{a_n\}$ be a sequence of
positive real numbers satisfying the assumptions of the theorem, so that 
the sequence $f(n)=\log_b a_n$ belongs to one of the
classes $\cki$ in Definition \ref{def:ck} for some $k$.
By Proposition \ref{prop:key} it follows that 
$\{f_\bft(n)\}$ is uniformly distributed modulo $1$
for all $k$-dimensional vectors  $\bft\in\ZZkz$, but not for all 
$(k+1)$-dimensional vectors $\bft\in\ZZ^{k+1}\setminus\{\bfo\}$. 
By Lemma \ref{lem:localBenford} it follows that $\{a_n\}$ is locally Benford
of order $k$, but not of order $k+1$.
This establishes part (i) of the theorem.

To prove part (ii), let $f^{(h)}(n)=\log_b a_n^{(h)}$, so that in
particular $f^{(1)}(n)=f(n)$. 
Then, by the   
definition \eqref{eq:iteratedprod} of the iterated products,  
we have, for any  $h\ge2$,
\begin{align*}
f^{(h)}(n) &=\log_b a_n^{(h)}= \log_b \prod_{m=1}^n a_m^{(h-1)}
\\
&=
\sum_{m=1}^n \log_b a_m^{(h-1)} = \sum_{m=1}^n f^{(h-1)}(m),
\end{align*}
and hence
\[
\Delta f^{(h)}(n)=f^{(h)}(n+1)-f^{(h)}(n)=f^{(h-1)}(n+1).
\]
By iteration we obtain
\[
\Delta^{h-1} f^{(h)}(n)=f^{(1)}(n+h-1)=f(n+h-1)
\]
and hence  
\begin{equation}
\label{eq:Deltafh}
\Delta^{h+k-1}f^{(h)}(n)= \Delta^k f(n+h-1).
\end{equation}
By the assumptions of the theorem, the sequence $\{f(n)\}$, and
hence also the shifted sequence $\{f(n+h-1)\}$,  satisfies 
one of the asymptotic conditions defining the classes $\cki$.
By \eqref{eq:Deltafh}, it follows that the sequence $\{f^{(h)}(n)\}$
satisfies the same condition with $k$ replaced by $k+h-1$, and hence
belongs to one of the classes $C_{k+h-1,i}$.
Applying again Proposition \ref{prop:key} and Lemma
\ref{lem:localBenford} we conclude that $\{a_n^{(h)}\}$ is locally
Benford of order $k+h-1$, but not of any higher order.
\qed


\section{Proof of Proposition \protect \ref{prop:key}}
\label{sec:proof-prop-key}

\subsection{Auxiliary results}
We collect here some known results from the theory of uniform
distribution modulo $1$ that we will need for the proof of Proposition
\ref{prop:key}. 

\begin{lem}
[van der Corput's Difference Theorem ({\cite[Chapter 1, Theorem
3.1]{kuipers}})]
\label{lem:van-der-corput-theorem}
Let $\{f(n)\}$ be a sequence of real numbers such that, for each positive
integer $h$, the sequence $\{f(n+h)-f(n)\}$ is uniformly distributed 
modulo $1$. Then $\{f(n)\}$ is uniformly distributed modulo $1$.
\end{lem}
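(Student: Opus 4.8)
The plan is to prove this via Weyl's Criterion (Lemma \ref{lem:weyl}(i)) together with the classical van der Corput inequality, which is essentially a clever application of the Cauchy--Schwarz inequality. Fix an integer $t\neq 0$ and set $u_n=e^{2\pi i t f(n)}$, so that $|u_n|=1$; by Weyl's Criterion it suffices to show that $S_N:=\frac1N\sum_{n=1}^N u_n\to 0$ as $N\to\infty$. The guiding idea is that the hypothesis controls the ``correlation'' sums $\sum_n u_{n+h}\overline{u_n}=\sum_n e^{2\pi i t(f(n+h)-f(n))}$, and the van der Corput inequality converts control of these correlations into control of $\sum_n u_n$ itself.

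First I would establish the van der Corput inequality: for any integer $H$ with $1\le H\le N$,
\begin{equation*}
H^2\left|\sum_{n=1}^N u_n\right|^2
\le (N+H-1)\left(H\sum_{n=1}^N |u_n|^2
+ 2\sum_{h=1}^{H-1}(H-h)\left|\sum_{n=1}^{N-h} u_{n+h}\overline{u_n}\right|\right).
\end{equation*}
The derivation is the standard one: extend $u_n$ by $0$ for $n\le 0$ and $n>N$, observe that $H\sum_{n=1}^N u_n=\sum_r\sum_{h=0}^{H-1}u_{r+h}$, where $r$ ranges over the $N+H-1$ indices for which the inner sum can be nonzero, apply Cauchy--Schwarz to this representation, and finally expand the resulting square and collect terms according to the difference $h-h'$ of the two summation indices. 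The diagonal terms contribute $H\sum_n|u_n|^2$, while the off-diagonal terms give the correlation sums weighted by $(H-h)$.

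Next I would feed in the hypothesis. Since $|u_n|=1$, we have $\sum_{n=1}^N|u_n|^2=N$. For each fixed $h\ge 1$ the sequence $\{f(n+h)-f(n)\}$ is uniformly distributed modulo $1$ by assumption, so Weyl's Criterion (applied with the nonzero integer $t$) gives $\sum_{n=1}^{N-h}e^{2\pi i t(f(n+h)-f(n))}=o(N)$ as $N\to\infty$. Dividing the van der Corput inequality by $H^2N^2$ and letting $N\to\infty$ with $H$ held fixed, the first term on the right tends to $1/H$ while each of the finitely many correlation terms tends to $0$; this yields $\limsup_{N\to\infty}|S_N|^2\le 1/H$. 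Since $H$ was an arbitrary positive integer, letting $H\to\infty$ forces $\limsup_{N\to\infty}|S_N|=0$, i.e.\ $S_N\to 0$. As $t\neq 0$ was arbitrary, Weyl's Criterion yields that $\{f(n)\}$ is uniformly distributed modulo $1$.

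The main obstacle is the two-stage limiting argument, and in particular the order of the quantifiers: one must fix $H$, send $N\to\infty$ to kill the correlation terms (using that there are only $H-1$ of them, each $o(N)$), and only afterwards send $H\to\infty$ to make the diagonal contribution $1/H$ vanish. Reversing this order fails, since for $H$ comparable to $N$ the correlation sums are no longer controlled uniformly. Establishing the van der Corput inequality itself is routine once the overlap counting in the Cauchy--Schwarz step is set up correctly, so the crux lies in the careful bookkeeping of the double limit rather than in any deep new idea.
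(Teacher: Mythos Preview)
Your proof is correct and is essentially the standard classical argument for van der Corput's Difference Theorem. However, note that the paper does not supply its own proof of this lemma: it is simply quoted as a known result, with a citation to \cite[Chapter~1, Theorem~3.1]{kuipers}. Your argument is precisely the proof one finds there (Weyl's Criterion combined with the van der Corput--Cauchy--Schwarz inequality and the fixed-$H$-then-$H\to\infty$ double limit), so there is nothing to compare beyond observing that you have reproduced the cited proof rather than invoked it.
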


The following result is a discrete version of a classical exponential sum 
estimate of van der Corput (see, e.g., Theorem 2.2 in Graham and Kolesnik 
\cite{graham-kolesnik}), with the second derivative $f''(x)$ replaced by
its discrete analog, $\Delta^2 f(n)$.  It can be proved by following the
argument in \cite{graham-kolesnik}, using the
(discrete) Kusmin-Landau inequality (see \cite{mordell}) in place of
Theorem 2.1 of \cite{graham-kolesnik} (which is a continuous version of
the Kusmin-Landau inequality).

\begin{lem}
[Discrete van der Corput Lemma]
\label{lem:discrete-van-der-corput}
Let $\{f(n)\}$ be a sequence of real numbers, let $a<b$ be 
positive integers,
and suppose that, for some real numbers $\Lambda>0$ and
$\alpha>1$, 
\begin{equation}
\label{eq:discrete-van-der-corput-hypothesis}
\Lambda \le |\Delta^2 f(n)|\le \alpha \Lambda \quad 
\quad (a\le n<b).
\end{equation}
Then 
\begin{equation}
\label{eq:discrete-van-der-corput}
\left|\sum_{n=a}^{b-1} e^{2\pi i f(n)}\right|
\le C \left(\alpha (b-a)\Lambda^{1/2} + \Lambda^{-1/2}\right),
\end{equation}
where $C$ is an absolute constant.
\end{lem}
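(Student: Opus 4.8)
The plan is to deduce this second-difference estimate from the first-difference (discrete Kusmin--Landau) estimate by a partitioning argument, in exact analogy with the continuous van der Corput second-derivative test of \cite{graham-kolesnik}. Throughout, write $S=\sum_{n=a}^{b-1}e^{2\pi i f(n)}$ and $N=b-a$, and let $\|x\|$ denote the distance from $x$ to the nearest integer. First I would dispose of the range in which $\Lambda$ exceeds an absolute constant, say $\Lambda\ge1/4$: there the trivial bound $|S|\le N$ already gives \eqref{eq:discrete-van-der-corput}, since $\alpha>1$ forces $C\alpha N\Lambda^{1/2}\ge N$ once $C\ge2$. Hence I may assume $\Lambda\le1/4$, so that $\lambda:=\Lambda^{1/2}\le1/2$; this restriction is also what lets the secondary error terms be absorbed into $\Lambda^{-1/2}$ at the very end. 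I would also reduce to the case in which $\Delta^2 f(n)$ has constant sign on $[a,b)$, so that the first difference $\Delta f(n)$ is strictly monotone there; without loss of generality $\Delta f$ is increasing, with each step satisfying $\Lambda\le\Delta^2 f(n)\le\alpha\Lambda$.

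The core of the argument is to partition $\{a,\dots,b-1\}$ into maximal blocks on which $\Delta f(n)$ stays inside a single unit interval $[m,m+1)$. Since $\Delta f$ is monotone with total increase $\sum_{n=a}^{b-2}\Delta^2 f(n)\le\alpha\Lambda N$, the value $\lfloor\Delta f(n)\rfloor$ takes at most $\alpha\Lambda N+1$ distinct values, so there are at most $\alpha\Lambda N+1$ such blocks. Within each block I would separate a \emph{good} middle portion, where $\Delta f(n)\in[m+\lambda,\,m+1-\lambda]$ and hence $\|\Delta f(n)\|\ge\lambda$, from the two \emph{bad} end portions, where $\Delta f(n)$ lies within $\lambda$ of $m$ or of $m+1$; monotonicity makes each portion a run of consecutive indices. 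On a good portion $\Delta f$ is monotone and stays at distance at least $\lambda$ from every integer, so the discrete Kusmin--Landau inequality (\cite{mordell}) bounds the corresponding exponential sum by $\cot(\pi\lambda/2)\le C\lambda^{-1}=C\Lambda^{-1/2}$. As there is one good portion per block, the good portions together contribute at most
\[
(\alpha\Lambda N+1)\,C\Lambda^{-1/2}=C\bigl(\alpha N\Lambda^{1/2}+\Lambda^{-1/2}\bigr),
\]
which is already of the asserted shape.

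The bad portions I would handle by the trivial estimate. On any bad portion $\Delta f$ ranges over an interval of length at most $\lambda$, and since each step increases $\Delta f$ by at least $\Lambda$, such a portion contains at most $\lambda/\Lambda+1=\Lambda^{-1/2}+1$ indices. Bounding every term by $1$ and summing over the at most $2(\alpha\Lambda N+1)$ bad portions gives a contribution that is $O\bigl((\alpha\Lambda N+1)(\Lambda^{-1/2}+1)\bigr)$. Here the assumption $\Lambda\le1/4$ does the cleanup: $\alpha\Lambda N\le\alpha N\Lambda^{1/2}$ and $1\le\Lambda^{-1/2}$, so this too is $O\bigl(\alpha N\Lambda^{1/2}+\Lambda^{-1/2}\bigr)$. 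Adding the good and bad contributions and absorbing everything into a single absolute constant $C$ yields \eqref{eq:discrete-van-der-corput}.

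The main obstacle is bookkeeping rather than any single hard inequality: I must ensure that the factor $\Lambda^{-1/2}$ produced by Kusmin--Landau is multiplied only by the controlled count $O(\alpha\Lambda N+1)$ of blocks, never by the full length $N$, and that all the stray $+1$ terms and bad-portion lengths are genuinely absorbed once $\Lambda$ is small. The subtlest point is the reduction to $\Delta^2 f$ of constant sign: this is automatic in the continuous statement of \cite{graham-kolesnik} (where $f''$ is continuous and bounded away from $0$), but in the discrete setting it must be arranged by hand. I expect to justify it by noting that in the applications the relevant sequence $\Delta^2 f_{\bft}(n)$ has eventually constant sign, so one may restrict to a subinterval of $[a,b)$ on which the sign is fixed; the monotonicity of $\Delta f$ needed both for the Kusmin--Landau step and for the block count is then available.
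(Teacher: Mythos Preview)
Your approach matches the paper's exactly: the paper does not give a detailed proof but simply states that the lemma ``can be proved by following the argument in \cite{graham-kolesnik}, using the (discrete) Kusmin-Landau inequality (see \cite{mordell}) in place of Theorem~2.1 of \cite{graham-kolesnik},'' and your proposal carries out precisely this partition-and-Kusmin--Landau strategy with the correct bookkeeping. Your honest flag about the constant-sign reduction for $\Delta^2 f$ is appropriate; the paper's applications (see \eqref{eq:ud-fn-2}) indeed only invoke the lemma when $\Delta^2 f$ has a definite sign, so your remark that this can be arranged in practice is consistent with how the lemma is used.
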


\subsection{Uniform distribution of sequences  in $\cki$}
In this subsection we show that sequences $\{f(n)\}$ belonging to one of
the classes $\cki$ are uniformly distributed modulo $1$.  We proceed by
induction on $k$.  The base case, $k=1$, is contained in the following lemma.

\begin{lem}[Uniform distribution of sequences in $C_{1,i}$]
\label{lem:ud-mod1-c1i}
Let $\{f(n)\}$ be a sequence of real numbers belonging to one of the
classes $C_{1,i}$, i.e., satisfying one of the conditions
\begin{align}
\label{eq:class-c1z}
&\lim_{n\to\infty}\Delta f(n)=\theta\quad
\text{for some $\theta\not\in\QQ$,}
\\
\label{eq:class-c1a}
&\lim_{n\to\infty}n^\alpha\Delta f(n)=\lambda\quad
\text{for some $\alpha\in(0,1)$ and $\lambda\not=0$,}
\\
\label{eq:class-c1o}
&\lim_{n\to\infty}n\Delta^2 f(n)=\lambda\quad
\text{for some $\lambda\not=0$.}
\end{align}
Then $\{f(n)\}$ is uniformly distributed modulo $1$.
\end{lem}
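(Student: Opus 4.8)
The plan is to verify Weyl's criterion \eqref{eq:weyl} for each of the three cases, using the appropriate summation technique in each case. Fix a nonzero integer $t$; we must show $\frac1N\sum_{n\le N}e^{2\pi i t f(n)}\to0$. The overall strategy is to reduce each case to a situation where either van der Corput's Difference Theorem (Lemma \ref{lem:van-der-corput-theorem}) or the discrete van der Corput Lemma (Lemma \ref{lem:discrete-van-der-corput}) can be applied to $tf(n)$.

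The easiest case is \eqref{eq:class-c1z}: if $\Delta f(n)\to\theta$ with $\theta$ irrational, then for fixed $h\ge1$ one has $f(n+h)-f(n)=\sum_{j=0}^{h-1}\Delta f(n+j)\to h\theta$, which is again irrational. Thus $\{t(f(n+h)-f(n))\}$ converges to the irrational number $th\theta$ modulo $1$, so this sequence is certainly uniformly distributed modulo $1$ (a sequence converging to an irrational limit is trivially u.d. mod $1$ — more precisely, $f(n+h)-f(n)=h\theta+o(1)$ and one checks Weyl's criterion directly, the exponential sum being $e^{2\pi i t h\theta}N + o(N)$... wait, that does not vanish). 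Let me instead argue: $g(n):=f(n+h)-f(n)$ satisfies $g(n)\to h\theta$, so $\Delta g(n)\to0$; applying the discrete van der Corput Lemma is not quite right either since $\Delta^2 f$ need not be controlled. The cleanest route here is: $f(n)=\theta n + \varepsilon(n)$ where $\varepsilon(n)=o(n)$ — no, that is also not immediate. I would instead prove case \eqref{eq:class-c1z} by a direct argument: writing $f(n) = f(1) + \sum_{m=1}^{n-1}\Delta f(m)$ and using that $\Delta f(m)=\theta+\delta(m)$ with $\delta(m)\to0$, one gets $f(n)=\theta n + o(n) + O(1)$; then a standard Abel-summation / Cesàro argument shows $\{tf(n)\}$ is u.d. mod $1$ because $\{t\theta n\}$ is and the perturbation $o(n)$ must be handled via van der Corput's difference theorem applied to $f$ itself — for which one checks that $f(n+h)-f(n)-th\theta = o(1)$ pointwise is \emph{not} enough, but $\frac1N\sum_n e^{2\pi i t(f(n+h)-f(n))}\to e^{2\pi i th\theta}$, which has modulus $1$, so the difference theorem is inapplicable. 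I therefore expect the honest proof of \eqref{eq:class-c1z} to go through the substitution $f(n)=\theta n+u(n)$ with $u(n+1)-u(n)\to0$, and then to invoke a lemma (Lemma 4.9-style) that $\{\theta n + u(n)\}$ is u.d. mod $1$ whenever $\theta$ is irrational and $u(n+1)-u(n)\to0$; this is a known perturbation result and is the technical heart of this case.

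For cases \eqref{eq:class-c1a} and \eqref{eq:class-c1o}, the natural tool is the discrete van der Corput Lemma applied to $g(n):=tf(n)$. In case \eqref{eq:class-c1a}, from $n^\alpha\Delta f(n)\to\lambda$ one computes $\Delta^2 f(n) = \Delta f(n+1)-\Delta f(n) \sim \lambda((n+1)^{-\alpha}-n^{-\alpha})\sim -\alpha\lambda n^{-\alpha-1}$, so $|\Delta^2(tf)(n)|\asymp |t|\,n^{-1-\alpha}$. One then splits $\{1,\dots,N\}$ into dyadic-type blocks $[M,2M)$ on which $\Lambda\asymp M^{-1-\alpha}$ and $\alpha$-ratio is bounded, applies \eqref{eq:discrete-van-der-corput} to get a bound $\ll M\cdot M^{(-1-\alpha)/2} + M^{(1+\alpha)/2} = M^{(1-\alpha)/2}+M^{(1+\alpha)/2}\ll M^{(1+\alpha)/2}$, and sums over $O(\log N)$ blocks to obtain $o(N)$ since $(1+\alpha)/2<1$. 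Case \eqref{eq:class-c1o} is handled the same way: $n\Delta^2 f(n)\to\lambda$ gives $|\Delta^2(tf)(n)|\asymp|t|/n$, so on a block $[M,2M)$ we take $\Lambda\asymp M^{-1}$, and \eqref{eq:discrete-van-der-corput} yields $\ll M\cdot M^{-1/2}+M^{1/2}=M^{1/2}$, again summing to $o(N)$ over dyadic blocks.

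The main obstacle, as indicated, is case \eqref{eq:class-c1z}: the difference theorem cannot be applied directly because the differenced sequence converges to an irrational constant (for which the Weyl sum does not decay), and the discrete van der Corput Lemma requires a genuine lower bound $|\Delta^2 f(n)|\ge\Lambda>0$, which fails here since $\Delta^2 f(n)\to0$ with no guaranteed rate. The resolution I anticipate is to isolate the linear part and prove a stand-alone perturbation lemma: if $\theta\notin\QQ$ and $\{u(n)\}$ satisfies $u(n+1)-u(n)\to0$, then $\{\theta n+u(n)\}$ is u.d. mod $1$. This itself can be proved by approximating $u$ by a slowly varying step function and using that $\theta n$ is u.d. on each long interval where $u$ is nearly constant — equivalently, a summation-by-parts argument on the Weyl sum $\sum e^{2\pi i t\theta n}e^{2\pi i t u(n)}$, where $\sum_{n\le M}e^{2\pi i t\theta n}=O_t(1)$ by the geometric series and $e^{2\pi i t u(n)}$ has total variation $o(N)$. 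I would present that summation-by-parts estimate as the crux of the lemma.
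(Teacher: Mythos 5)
Your treatment of case \eqref{eq:class-c1a} has a genuine gap. From $n^\alpha\Delta f(n)\to\lambda$ you infer $\Delta^2 f(n)\sim\lambda\bigl((n+1)^{-\alpha}-n^{-\alpha}\bigr)\sim-\alpha\lambda n^{-1-\alpha}$, but this is differentiating an asymptotic: the hypothesis only gives $\Delta f(n)=\lambda n^{-\alpha}+\varepsilon(n)$ with $\varepsilon(n)=o(n^{-\alpha})$, whence $\Delta^2 f(n)=\lambda\bigl((n+1)^{-\alpha}-n^{-\alpha}\bigr)+\varepsilon(n+1)-\varepsilon(n)$, and the error $\varepsilon(n+1)-\varepsilon(n)=o(n^{-\alpha})$ is of strictly larger order than the putative main term $n^{-1-\alpha}$. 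Worse, the class $C_{1,\alpha}$ gives no lower bound on $|\Delta^2 f(n)|$ at all (take $\Delta f(n)$ constant on blocks of length $2$, so that $\Delta^2 f(n)=0$ for half of all $n$), so the two-sided hypothesis \eqref{eq:discrete-van-der-corput-hypothesis} of Lemma \ref{lem:discrete-van-der-corput} cannot be met and your dyadic-block estimate collapses. The paper's proof of this case uses only the first difference: on short intervals $[N,N+KN^\alpha)$ one has $t(f(N+m)-f(N))=tm\lambda N^{-\alpha}+o(1)$ uniformly in $m$, i.e.\ a nearly linear phase with small nonzero slope, so the geometric-series bound $\bigl|\sum_{m\le M}e^{2\pi i\theta m}\bigr|\le1/|\theta|$ saves a factor $2/(Kt\lambda)$ over the trivial bound on each block; letting $K\to\infty$ at the end gives Weyl's criterion. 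You need an argument of this shape (or some other device using only $\Delta f$), not the second-derivative test.

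The other two cases are in better shape. Your handling of \eqref{eq:class-c1o} is exactly the paper's: there the hypothesis does control $\Delta^2(tf)(n)\asymp t\lambda/n$ two-sidedly on $[N,2N)$, Lemma \ref{lem:discrete-van-der-corput} gives $O(N^{1/2})$ per dyadic block, and summing gives $o(N)$. For \eqref{eq:class-c1z}, the argument you eventually settle on --- write $f(n)=\theta n+u(n)$ with $\Delta u(n)\to0$, then sum by parts in $\sum_n e^{2\pi it\theta n}e^{2\pi itu(n)}$ using $\bigl|\sum_{n\le M}e^{2\pi it\theta n}\bigr|=O_t(1)$ and total variation $\sum_{n<N}|e^{2\pi itu(n+1)}-e^{2\pi itu(n)}|=o(N)$ --- is correct and complete; the paper simply cites \cite[Chapter 1, Theorem 3.3]{kuipers} for this case, and your argument is essentially the standard proof of that theorem. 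You should, however, strip out the several abandoned attempts preceding it: as written, the false starts (including the correct observation that the difference theorem is inapplicable because $f(n+h)-f(n)$ tends to a constant) obscure the fact that the final argument stands on its own.
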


\begin{proof} 
The case \eqref{eq:class-c1z} of the lemma is Theorem 3.3 in Chapter 1 of
\cite{kuipers}. For the other two cases, \eqref{eq:class-c1a} and
\eqref{eq:class-c1o}, we will provide proofs as we have not been able to
locate specific references in the literature.

\medskip
\textbf{Case \eqref{eq:class-c1a}.}
Fix $\alpha\in (0,1)$ and consider a sequence $\{f(n)\}$ 
satisfying \eqref{eq:class-c1a}.
We will prove that this sequence is uniformly distributed modulo
$1$ by showing that it satisfies the Weyl Criterion
(see \eqref{eq:weyl} in Lemma \ref{lem:weyl}).

Let $t\in\ZZ\setminus\{0\}$ be given, and let $\lambda\not=0$ be as in
\eqref{eq:class-c1a}.  Without loss of generality, we may assume $t>0$ and
$\lambda>0$.  
Let $K$ be a large, but fixed, constant, and consider intervals of the
form $[N,N+KN^{\alpha})$.   
By \eqref{eq:class-c1a} we have, as $N\to\infty$, 
\begin{align}
\label{eq:cka-case2}
f(N+m)-f(N)&=\sum_{h=0}^{m-1}\Delta f(N+h)
=(1+o(1))\sum_{h=0}^{m-1}\lambda (N+h)^{-\alpha}
\\
\notag
&=(1+o(1))m\lambda N^{-\alpha}
\\
\notag 
&=m\lambda N^{-\alpha} + o(1)
\quad (0\le m<KN^\alpha),
\end{align}
where the convergence implied by the 
notation ``$o(1)$'' is uniform in  $0\le m< KN^{\alpha}$.
It follows that
\begin{align}
\label{eq:cka-case3}
\left|\sum_{N\le n<N+KN^{\alpha}}
e^{2\pi i t f(n)}\right|
&=\left|
\sum_{0\le m< K N^{\alpha}}
e^{2\pi i t (f(N+m)-f(N))}\right|
\\
\notag
&=\left|
\sum_{0\le m< K N^{\alpha}}
\left(
e^{2\pi i t m\lambda N^{-\alpha}}+ o(1)\right)\right|. 
\end{align}
Applying the elementary inequality 
\begin{equation}
\label{eq:cka-case4}
\left|\sum_{m=0}^M e^{2\pi i \theta m}\right|
\le \frac{2}{|e^{2\pi i\theta }-1|}
\le \frac{1}{|\theta|} 
\quad (0<|\theta|\le 1/2),
\end{equation}
we deduce that, for some $N_0$, 
\begin{equation}
\label{eq:cka-case5}
\left|\sum_{N\le n<N+KN^{\alpha}}
e^{2\pi i t f(n)}\right| \le \frac{2}{t\lambda N^{-\alpha}} \quad (N\ge N_0).
\end{equation}
Note that the bound \eqref{eq:cka-case5} represents a saving of a factor
$2/(Kt\lambda)$ over the trivial bound for the
exponential sum on the left.
By splitting the summation range $[N_0,N]$ 
 into subintervals of the form $[N', N'+KN'^{\alpha})$ 
 (where the initial interval may be of shorter length)
 and applying
 \eqref{eq:cka-case5} to each of these subintervals, we obtain 
\begin{equation}
\label{eq:cka-case6}
\limsup_{N\to\infty} \frac1N\left|\sum_{n=1}^N 
e^{2\pi i tf(n)}\right|
\le \frac{2}{Kt\lambda}.
\end{equation}
Since $K$ can be chosen arbitrarily large, 
the limit in \eqref{eq:cka-case6} must be $0$.
Hence the Weyl Criterion 
\eqref{eq:weyl} holds, and the proof of the lemma for the case
\eqref{eq:class-c1a} is complete.

\medskip

\textbf{Case \eqref{eq:class-c1o}.}
Suppose $\{f(n)\}$ is a sequence satisfying \eqref{eq:class-c1o}.
As before we will show that $\{f(n)\}$ is uniformly distributed by showing
that it satisfies the Weyl criterion \eqref{eq:weyl} 
for each $t\in\ZZ\setminus\{0\}$.

Without loss of generality we may assume $t>0$ and $\lambda>0$.
With these simplifications, our assumption \eqref{eq:class-c1o} implies 
\[
\frac{\lambda}{2n}<\Delta^2 f(n)<\frac{2\lambda}{n}
\quad(n\ge N_0)
\]
for some positive integer $N_0$.  Hence we have
\begin{equation}
\label{eq:ud-fn-2}
\frac{t\lambda}{4N}< \Delta^2 (t f(n))< \frac{2 t\lambda}{N}\quad 
(N_0\le N\le n< 2N).
\end{equation}
Thus the sequence $\{t f(n)\}$ satisfies the 
assumption \eqref{eq:discrete-van-der-corput-hypothesis} 
of Lemma \ref{lem:discrete-van-der-corput} on any interval of the form 
$[N,M)$, $N_0\le N<M\le 2N$, 
with the constants 
$\Lambda=t\lambda/(4N)$ and $\alpha=8$. It follows that
\begin{align}
\label{eq:ud-fn-3}
\left|\sum_{n=N}^{M-1} e^{2\pi i t f(n)}\right|
&\le C\left(
8(M-N)\left(\frac{t\lambda}{4N}\right)^{1/2}
+\left(\frac{t\lambda}{4N}\right)^{-1/2}\right)
\\
\notag
&\le C_{t,\lambda} N^{1/2}
\quad (N_0\le N<M\le 2N),
\end{align}
where $C_{t,\lambda}$ is a constant depending only on $t$ and $\lambda$.
The desired relation \eqref{eq:weyl} then follows by  
splitting the summation range into dyadic intervals of the form
$[N',2N')$, along with an interval  $[N',N]$, where $N<2N'$.
\end{proof}

\begin{lem}[Uniform distribution of sequences in $\cki$]
\label{lem:ud-mod1-cki}
Let $\{f(n)\}$ be a sequence of real numbers belonging to one of the
classes $\cki$.  Then $\{f(n)\}$ is uniformly distributed modulo $1$.
\end{lem}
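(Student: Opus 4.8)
The plan is to prove the lemma by induction on the order $k$, using van der Corput's Difference Theorem (Lemma~\ref{lem:van-der-corput-theorem}) for the inductive step. The base case $k=1$ is precisely Lemma~\ref{lem:ud-mod1-c1i}, so it remains to handle $k\ge 2$ assuming the lemma already holds for all sequences in the classes $C_{k-1,i}$.

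So fix $k\ge 2$ and let $\{f(n)\}$ belong to one of the classes $\cki$. By Lemma~\ref{lem:van-der-corput-theorem} it suffices to show that for every fixed positive integer $h$ the difference sequence $\{g(n)\}$, $g(n)=f(n+h)-f(n)$, is uniformly distributed modulo $1$; and by the induction hypothesis this will follow once we verify that $\{g(n)\}$ itself lies in the corresponding class $C_{k-1,i}$.

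To identify that class, I would use the telescoping identity $g(n)=\sum_{j=0}^{h-1}\Delta f(n+j)$ together with the facts that $\Delta^m$ commutes with the shift and $\Delta^m\Delta=\Delta^{m+1}$, which yield, for every $m\ge 0$,
\[
\Delta^m g(n)=\sum_{j=0}^{h-1}\Delta^{m+1}f(n+j).
\]
Now split into the three cases of Definition~\ref{def:ck}. If $\{f(n)\}$ is of class $\ckz$ with $\Delta^k f(n)\to\theta\notin\QQ$, take $m=k-1$: each summand $\Delta^k f(n+j)\to\theta$, so $\Delta^{k-1}g(n)\to h\theta$, and $h\theta\notin\QQ$ since $h$ is a positive integer; hence $\{g(n)\}\in C_{k-1,0}$. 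If $\{f(n)\}$ is of class $\cka$ with $n^\alpha\Delta^k f(n)\to\lambda\ne0$, take $m=k-1$: since $(n+j)/n\to1$ for each fixed $j$, we have $\Delta^k f(n+j)\sim\lambda n^{-\alpha}$, so $n^\alpha\Delta^{k-1}g(n)\to h\lambda\ne0$, hence $\{g(n)\}\in C_{k-1,\alpha}$. If $\{f(n)\}$ is of class $\cko$ with $n\Delta^{k+1}f(n)\to\lambda\ne0$, take $m=k$: likewise $\Delta^{k+1}f(n+j)\sim\lambda n^{-1}$, so $n\Delta^{k}g(n)\to h\lambda\ne0$, matching the defining condition of $C_{k-1,1}$. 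In all three cases $\{g(n)\}\in C_{k-1,i}$, the induction hypothesis applies, and the proof is complete.

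I do not expect a genuine obstacle here: the crux is the elementary observation that the shift-difference $f(n+h)-f(n)$ lowers the order of the class by one while preserving the subscript type. The only point requiring a little care is that passing to the $h$-fold difference multiplies the relevant limit by the positive integer $h$, which preserves irrationality in the $\ckz$ case and nonvanishing in the $\cka$ and $\cko$ cases; once that is noted, everything reduces to routine bookkeeping with the identity displayed above.
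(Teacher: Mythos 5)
Your proof is correct and follows essentially the same route as the paper: induction on $k$ with Lemma~\ref{lem:ud-mod1-c1i} as the base case, van der Corput's Difference Theorem for the inductive step, and the telescoping identity $\Delta^{m}\bigl(f(n+h)-f(n)\bigr)=\sum_{j=0}^{h-1}\Delta^{m+1}f(n+j)$ to show the difference sequence drops into the class one order lower with the limit multiplied by $h$. The only differences are cosmetic (you index the induction from $k-1$ to $k$ rather than $k$ to $k+1$, and you write out the three cases explicitly where the paper treats them uniformly).
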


\begin{proof}
We proceed by induction on $k$. The base case, $k=1$, is covered by Lemma
\ref{lem:ud-mod1-c1i}.
For the induction step,
let $k\in\NN$ be given, assume any sequence belonging to one of the classes
$\cki$ is uniformly
distributed modulo $1$,  and let $\{f(n)\}$ be a sequence in 
one of the classes $C_{k+1,i}$. Thus, 
$\{f(n)\}$ satisfies one of the relations 
\begin{align}
\label{eq:class-ckplusz}
&\lim_{n\to\infty}\Delta^{k+1} f(n)=\theta\quad
\text{for some $\theta\not\in\QQ$,}
\\
\label{eq:class-ckplusa}
&\lim_{n\to\infty}n^\alpha\Delta^{k+1} f(n)=\lambda\quad
\text{for some $\alpha\in(0,1)$ and $\lambda\not=0$,}
\\
\label{eq:class-ckpluso}
&\lim_{n\to\infty}n\Delta^{k+2} f(n)=\lambda\quad
\text{for some $\lambda\not=0$.}
\end{align}

By Lemma \ref{lem:van-der-corput-theorem}, to show that $\{f(n)\}$ is
uniformly distributed modulo $1$, it suffices to show that, for each
positive integer $h$, the sequence 
\begin{equation}
\label{eq:delta-h}
\Delta_hf(n)=f(n+h)-f(n).
\end{equation}
is uniformly distributed
modulo $1$.  We will do so by showing that the sequences $\{\Delta_h
f(n)\}$ belong to one of the classes $\cki$ and applying the induction
hypothesis.

We have 
\begin{equation}
\label{eq:delta-k-identity}
\Delta^k(\Delta_h  f(n))
=\Delta^k\left(\sum_{i=0}^{h-1}\Delta f(n+i)\right)
=\sum_{i=0}^{h-1} \Delta^{k+1} f(n+i).
\end{equation}
It follows that if $\{f(n)\}$ satisfies 
one of the relations \eqref{eq:class-ckplusz}--\eqref{eq:class-ckpluso}, 
then $\{\Delta_h f(n)\}$ satisfies the corresponding relation with $k+1$
replaced by $k$ and $\theta$ (resp. $\lambda$) replaced by $h\theta$
(resp. $h\lambda$).  Hence $\{\Delta_h f(n)\}$ belongs to one of the classes
$\cki$, and applying the induction hypothesis we conclude that this sequence
is uniformly distributed modulo $1$.
Hence the sequence $\{f(n)\}$ itself is uniformly distributed modulo $1$, 
as desired.
\end{proof}

\subsection{Proof of Proposition \ref{prop:key}, part (i).}
We proceed by induction on $k$. In the case $k=1$ we have $\bft=(t_0)$ and 
$f_{\bft}(n)=t_0f(n)$, so the assertion reduces to showing that if
a sequence $\{f(n)\}$ belongs to one of the classes $C_{1,i}$, then for any
non-zero integer $t$, the sequence $\{t f(n)\}$ is uniformly distributed
modulo $1$. But this follows from Lemma \ref{lem:ud-mod1-c1i}, applied with
the function $tf(n)$ in place of $f(n)$,  upon noting that $\{tf(n)\}$
belongs to $C_{1,i}$ if and only if $\{f(n)\}$ belongs to $C_{1,i}$.

Now suppose that the assertion holds for some integer $k\ge 1$, i.e., 
suppose that for any sequence $\{f(n)\}$ belonging to one of the classes
$\cki$ and any nonzero $k$-dimensional vector $\bft$, the sequence 
$\{f_{\bft}(n)\}$ is uniformly distributed modulo $1$. 

Let $\{f(n)\}$ be a sequence in $C_{k+1,i}$, and let
$\bft=(t_0,t_1\dots,t_{k})\in \ZZ^{k+1}\setminus\{\mathbf{0}\}$ be given.
We seek to show that the sequence $\{f_\bft(n)\}$ is uniformly distributed
modulo $1$.
We distinguish two cases, according to whether or not the sum $\sum_{i=0}^k t_i$ 
vanishes.

\medskip

Suppose first that $\sum_{i=0}^k t_i\not=0$. Using the identities 
\begin{align*}
\Delta^{k+1} f_\bft(n)&=\sum_{i=0}^kt_i \Delta^{k+1}f(n+i),
\quad \Delta^{k+2} f_\bft(n)=\sum_{i=0}^kt_i \Delta^{k+2}f(n+i),
\end{align*}
we see that if $f$ satisfies one of the relations 
\eqref{eq:class-ckplusz}--\eqref{eq:class-ckpluso}, 
then $f_\bft$ satisfies the same relation
with the constants $\theta$
(resp. $\lambda$) replaced by $\theta\sum_{i=0}^k t_i$
(resp. $\lambda\sum_{i=0}^k t_i$).  Since, by our assumption, $\sum_{i=0}^kt_i$
is a non-zero integer, it follows that the sequence $\{f_\bft(n)\}$ belongs
to one of the classes $\cki$.  Hence, Lemma \ref{lem:ud-mod1-cki}  can be
applied to this sequence and shows that it is uniformly distributed modulo $1$.

\medskip

Now suppose that $\sum_{i=0}^k t_i=0$.
In this case we express $f_\bft(n)$ in terms of the difference function 
$\Delta f(n)$, with a view towards applying the induction hypothesis  
to the latter function. We have
\begin{align}
\label{eq:identity0}
f_\bft(n)&=\sum_{i=0}^kt_if(n+i) =
f(n)\sum_{i=0}^kt_i +\sum_{i=1}^k t_i\sum_{j=0}^{i-1} \Delta f(n+j) 
\\
\notag
&=
\sum_{j=0}^{k-1}\left(\sum_{i=j+1}^k t_i\right) \Delta f(n+j), 
\end{align}
where in the last step we have used our assumption $\sum_{i=0}^k t_i=0$.
Setting
\begin{align}
\label{eq:si-ti}
s_j&=\sum_{i=j+1}^k t_i\quad (j=0,1,\dots,k-1),\quad 
\bfs=(s_0,\dots,s_{k-1}),
\\
\label{eq:g-def}
g(n)&=\Delta f(n)=f(n+1)-f(n),
\end{align}
we can write \eqref{eq:identity0} as 
\begin{equation}
\label{eq:identity1}
f_\bft(n)=g_\bfs(n).
\end{equation}
Thus, to complete the induction step, it suffices to show that the sequence
$\{g_\bfs(n)\}$ is uniformly distributed modulo $1$.

Observe that the linear transformation \eqref{eq:si-ti} 
between the $k$-dimensional vectors 
$(s_0,\dots,s_{k-1})$ and $(t_1,\dots,t_{k})$ is an invertible
transformation on $\ZZk$.
In particular, we have 
$(s_0,\dots,s_{k-1})\not=\bfo$ if and only if 
$(t_1,\dots,t_{k})\not=\bfo$.
Our assumptions $(t_0,t_1,\dots,t_k)\not=\bf0$
and $\sum_{i=0}^k t_i=0$, force $(t_1,\dots,t_k)\not=\bfo$, and by
the above remark it follows that the vector $\bfs$ is a nonzero
$k$-dimensional vector with integer coordinates.

Since $\{f(n)\}$  belongs to one of the classes
$C_{k+1,i}$, the sequence $\{g(n)\}=\{\Delta f(n)\}$ belongs to 
one of the classes $\cki$ (cf. Remark \ref{rem:ck}(3)),
and since, as observed above,  $\bfs\in\ZZkz$,
the sequence $\{g_\bfs(n)\}$ satisfies the assumptions of the proposition
for the case $k$. Thus we can apply the induction hypothesis to conclude 
that the sequence $\{g_\bfs(n)\}$, and hence $\{f_\bft(n)\}$, 
is uniformly distributed modulo $1$.

This completes the proof of Proposition \ref{prop:key}(i).

\subsection{Proof of Proposition \ref{prop:key}, part (ii).}
A routine induction argument shows that 
\begin{equation}
\Delta^kf(n)=\sum_{i=0}^k (-1)^{k-i}\binom{k}{i}f(n+i).
\end{equation}
Thus $\Delta^kf(n)$ is of the form $f_\bft(n)$, where $\bft$ is  
the non-zero $(k+1)$-dimensional vector with components
$t_i=(-1)^{k-i}\binom{k}{i}$, $i=0,1,\dots,k$.
We will show that, if $\{f(n)\}$ belongs to one of the classes 
$\cki$, then $\{\Delta^k f(n)\}$,  and hence $\{f_\bft(n)\}$ with the
above choice of $\bft$,  is not uniformly distributed modulo $1$. 

If $\{f(n)\}$ belongs to the class $\ckz$, then
$\Delta^kf(n)\to \theta$ as $n\to\infty$, 
while if $\{f(n)\}$ belongs to the class $\cka$, then
$\Delta^kf(n)\to 0$ as $n\to\infty$.   Thus in either case
$\{\Delta^k f(n)\}$ cannot be uniformly distributed modulo $1$.

Now suppose $\{f(n)\}$ belongs to the class $\cko$. Then we have, 
for any integers $N\ge 1$ and $0\le m\le N$, 
\begin{align*}
\Delta^{k}f(N+m)-\Delta^{k}f(N)&=\sum_{h=0}^{m-1}\Delta^{k+1}f(N+h)
\\
&=(1+o(1))\sum_{h=0}^{m-1}\frac{\lambda}{N+h},
\end{align*}
as $N\to\infty$, uniformly in $0\le m\le N$.
Setting $\delta=\min(1,1/(4|\lambda|))$, 
it follows that, for sufficiently large $N$ and $0\le m\le \delta N$, 
\begin{equation*}
|\Delta^{k}f(N+m)-\Delta^{k}f(N)|\le \frac13.
\end{equation*}
But this implies that the numbers $\Delta^{k}f(n)$, $N\le n\le(1+\delta)N$,
after reducing modulo $1$, 
cover an interval of length at most $2/3$.
Hence the sequence $\{\Delta^k f(n)\}$ cannot be uniformly distributed
modulo $1$.

This completes the proof of Proposition \ref{prop:key}(ii).

\section{Proof of the corollaries}
\label{sec:proof-corollaries}

We begin with two auxiliary results. The first is a simple result from
the theory of uniform distribution modulo 1; see, for example, Theorem
1.2 in Chapter 1 of \cite{kuipers}.

\begin{lem}
\label{lem:asympudmod1}
Let $\{u_n\}$ and $\{u_n^*\}$ be sequences of real numbers satisfying
\begin{equation}
\notag
u_n-u_n^*\to0\quad (n\to\infty).
\end{equation}
Then $\{u_n\}$ is uniformly distributed modulo $1$ if and only if 
$\{u_n^*\}$ is uniformly distributed modulo $1$.
\end{lem}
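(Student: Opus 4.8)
The plan is to reduce the statement to Weyl's Criterion (Lemma \ref{lem:weyl}(i)) and then compare the two exponential sums term by term. By symmetry it suffices to prove one implication, say that if $\{u_n^*\}$ is uniformly distributed modulo $1$ then so is $\{u_n\}$; the converse follows by interchanging the roles of the two sequences.

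Fix a nonzero integer $t$. The first step is to estimate the difference of the corresponding Weyl sums:
\[
\left|\frac1N\sum_{n=1}^N e^{2\pi i t u_n}-\frac1N\sum_{n=1}^N e^{2\pi i t u_n^*}\right|
\le \frac1N\sum_{n=1}^N\left|e^{2\pi i t u_n}-e^{2\pi i t u_n^*}\right|.
\]
Writing $e^{2\pi i t u_n}-e^{2\pi i t u_n^*}=e^{2\pi i t u_n^*}\bigl(e^{2\pi i t(u_n-u_n^*)}-1\bigr)$ and using the elementary bound $|e^{ix}-1|\le|x|$ for real $x$, each summand is at most $2\pi|t|\,|u_n-u_n^*|$.

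The second step is to observe that, since $u_n-u_n^*\to0$ by hypothesis, the sequence $\{2\pi|t|\,|u_n-u_n^*|\}$ tends to $0$, and hence so does its Cesàro average $\frac1N\sum_{n=1}^N 2\pi|t|\,|u_n-u_n^*|$. Consequently the difference of the two Weyl sums above tends to $0$ as $N\to\infty$. Since $\{u_n^*\}$ is uniformly distributed modulo $1$, Weyl's Criterion gives $\frac1N\sum_{n=1}^N e^{2\pi i t u_n^*}\to0$, and combining this with the previous estimate yields $\frac1N\sum_{n=1}^N e^{2\pi i t u_n}\to0$. As $t\in\ZZ\setminus\{0\}$ was arbitrary, Weyl's Criterion (applied in the reverse direction) shows that $\{u_n\}$ is uniformly distributed modulo $1$, completing the proof.

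There is no substantive obstacle here; the only point requiring (entirely routine) care is the passage from $a_n\to0$ to $\frac1N\sum_{n\le N}a_n\to0$, handled by splitting the sum at a threshold beyond which $|a_n|$ is uniformly small and bounding the initial block trivially.
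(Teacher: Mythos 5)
Your proof is correct. Note that the paper does not actually supply a proof of this lemma: it simply cites Theorem 1.2 in Chapter 1 of Kuipers--Niederreiter, where the result is proved directly from the definition of uniform distribution, by comparing the counting functions $\#\{n\le N:\{u_n\}\le\alpha\}$ and $\#\{n\le N:\{u_n^*\}\le\alpha\}$ on slightly enlarged and shrunk intervals $[0,\alpha\pm\varepsilon]$ (using that $|u_n-u_n^*|<\varepsilon$ for all large $n$). Your route through Weyl's Criterion is a genuinely different and equally standard argument: the estimate $|e^{2\pi i t u_n}-e^{2\pi i t u_n^*}|\le 2\pi|t|\,|u_n-u_n^*|$ is valid, the Ces\`aro average of a null sequence is null, and the symmetry reduction is legitimate since the hypothesis is symmetric in the two sequences. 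The Weyl-sum approach has the advantage of being entirely computation-free at the level of interval counts and fits naturally with the rest of the paper's machinery (which works throughout with the sums in \eqref{eq:weyl}); the counting-function approach avoids invoking Weyl's Criterion and works verbatim for the slightly more general hypothesis $u_n-u_n^*\to\alpha$ for a constant $\alpha$, which is the form actually stated in the cited reference. Either way, the lemma is established.
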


\begin{lem}
\label{lem:asympBenford}
Let $\{a_n\}$ and $\{a_n^*\}$
be sequences of positive real numbers satisfying
\begin{equation}
\label{eq:an-asymp-anstar}
a_n\sim a_n^* \quad (n\to\infty).
\end{equation}
Then $\{a_n\}$ is locally Benford distributed of order $k$
with respect to some base $b$
if and only if  $\{a_n^*\}$ is locally Benford distributed of
order $k$ with respect to the same base $b$.
\end{lem}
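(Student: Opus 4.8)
The plan is to transfer the problem to the language of uniform distribution modulo~$1$ via Lemma~\ref{lem:localBenford} and then invoke Lemma~\ref{lem:asympudmod1}. Write $f(n)=\log_b a_n$ and $f^*(n)=\log_b a_n^*$. The hypothesis $a_n\sim a_n^*$ says $a_n/a_n^*\to 1$, so
\[
f(n)-f^*(n)=\log_b\!\left(\frac{a_n}{a_n^*}\right)\longrightarrow 0\qquad(n\to\infty).
\]
This is the only place the asymptotic hypothesis is used, and it reduces the lemma to a statement comparing two sequences that differ by a null sequence.

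Next I would fix an arbitrary nonzero integer vector $\bft=(t_0,\dots,t_{k-1})\in\ZZkz$ and consider $f_\bft(n)=\sum_{i=0}^{k-1}t_i f(n+i)$ together with its starred analogue $f^*_\bft(n)=\sum_{i=0}^{k-1}t_i f^*(n+i)$. Since each of the finitely many shifted sequences $\{f(n+i)-f^*(n+i)\}$ tends to $0$ and the coefficients $t_i$ are fixed, a finite linear combination of null sequences is again a null sequence, so $f_\bft(n)-f^*_\bft(n)\to 0$. By Lemma~\ref{lem:asympudmod1} it follows that, for this particular $\bft$, the sequence $\{f_\bft(n)\}$ is uniformly distributed modulo~$1$ if and only if $\{f^*_\bft(n)\}$ is. Applying Lemma~\ref{lem:localBenford}(ii) to each of $\{a_n\}$ and $\{a_n^*\}$, local Benford distribution of order $k$ with respect to base $b$ is equivalent to the uniform distribution modulo~$1$ of $\{f_\bft(n)\}$ (respectively $\{f^*_\bft(n)\}$) for \emph{every} $\bft\in\ZZkz$; since the equivalence just established holds $\bft$ by $\bft$, the two order-$k$ conditions are equivalent, which is the assertion of the lemma.

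I do not expect a genuine obstacle here; the argument is a routine combination of the three preceding lemmas. The only points requiring a word of care are that the number of terms in $f_\bft$ is fixed and finite (so that the limit relation passes through the sum) and that Lemma~\ref{lem:localBenford} and Lemma~\ref{lem:asympudmod1} are applied with one and the same base $b$ throughout, as in the statement. One could alternatively bypass the Weyl-criterion reformulation and argue directly from Definition~\ref{def:localBenford}, using the $k$-dimensional version of Lemma~\ref{lem:asympudmod1} applied to the vector-valued sequences $\bff(n)=(f(n),\dots,f(n+k-1))$ and $\bff^*(n)=(f^*(n),\dots,f^*(n+k-1))$, whose coordinatewise differences tend to $0$; this gives the same conclusion and also makes transparent that the case $k=1$ recovers the familiar fact that $a_n\sim a_n^*$ preserves Benford distribution.
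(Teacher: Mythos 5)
Your proof is correct and follows essentially the same route as the paper's: reduce to $f(n)-f^*(n)\to 0$, pass to the combinations $f_\bft$ via Lemma~\ref{lem:localBenford}(ii), and apply Lemma~\ref{lem:asympudmod1} for each fixed $\bft\in\ZZkz$. No substantive differences.
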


\begin{proof}
Set $f(n)=\log_b a_n$ and $f^*(n)=\log_b a_n^*$. The hypothesis
\eqref{eq:an-asymp-anstar} is equivalent to 
\begin{equation}
\label{eq:fn_asymp_fn*}
f(n)-f^*(n)\to 0\quad (n\to\infty).
\end{equation}
By Lemma \ref{lem:localBenford}, 
$\{a_n\}$ is locally Benford distributed of order $k$ if and only if, for
each $\bft=(t_0,\dots,t_{k-1})\in\ZZkz$,
the sequence
$\{f_\bft(n)\}=\{t_0f(n)+\dots + t_{k-1}f(n+k-1)\}$
is uniformly
distributed modulo $1$, and an analogous equivalence holds for the
sequences $\{a_n^*\}$ and $\{f^*_\bft(n)\}$. 
Thus, it suffices to show that if one the two sequences
$\{f_\bft(n)\}$ and $\{f^*_\bft(n)\}$ is uniformly distributed modulo
$1$, then so is the other. 

Now, from \eqref{eq:fn_asymp_fn*} we deduce that, 
for any $\bft=(t_0,\dots,t_{k-1})\in\ZZkz$, 
\[
f_\bft(n)-f^*_\bft(n)= \sum_{i=0}^{k-1}t_i(f(n+i)-f^*(n+i))
\to 0\quad (n\to\infty). 
\]
By Lemma \ref{lem:asympudmod1} this implies 
that $\{f_\bft(n)\}$ is uniformly distributed modulo $1$ if and only if
$\{f^*_\bft(n)\}$ is uniformly distributed modulo $1$, as claimed.  
\end{proof}

We are now ready to prove Corollaries \ref{thm:cor1}, \ref{thm:cor2}, and
\ref{thm:cor3}.

\begin{proof}[Proof of Corollary \ref{thm:cor1}]
By Lemma \ref{lem:asympBenford} we may assume that $a_n$ is of the form
$a_n=\lambda n^\gamma e^{cn^\beta}$, for some  $\lambda>0$,
$\gamma\in\RR$, $c>0$, $\beta>0$, with $\beta$ not an integer,  
Then $\log a_n = \log \lambda + \gamma \log n + cn^\beta$, so 
the function $f(n)=\log_b a_n$ is of the form 
$f(x) = c_0 + c_1 \log x + c_2 x^\beta$, for some constants
$c_0,c_1$, and $c_2\not=0$.
Setting $k=\lceil \beta\rceil$ and $\alpha=k-\beta$,  
we then have 
$f^{(k)}(x) = c_1'x^{-k} + c_2' x^{\beta-k}= c_1'x^{-k}+ c_2'
x^{-\alpha}$ for some constants $c_1'$ and $c_2'\not=0$. 
Hence $f(x)$ satisfies 
$x^\alpha f^{(k)}(x)\to c_2'$  as $x\to\infty$. 
By part (4) of Remark \ref{rem:ck}, this implies that
\[
n^\alpha \Delta^k f(n)\to c_2' \quad  (n\to\infty),
\]
i.e., $\{f(n)\}$ belongs to class $\cka$.
Theorem \ref{thm:main1}  then implies that $\{a_n\}$ has maximal 
local Benford order $k$, as claimed.
\end{proof}

\begin{proof}[Proof of Corollary \ref{thm:cor2}]
Applying Lemma \ref{lem:asympBenford} as before, 
we may assume that $a_n= \lambda n^{P(n)} b^{Q(n)}$,
where $P$ and $Q$ are polynomials and $\lambda>0$.
Let $f(n)=\log_b a_n$. Then $f(x)$ is of the form 
$f(x)=c P(x)\log x + Q(x)$, where $c=1/\log b$.

Let $k_P$ and $k_Q$ denote the degrees of the polynomials $P$ and $Q$
respectively, and suppose first that $k_P<k_Q$.
Setting $k=k_Q$, we have 
$(P(x) \log x)^{(k)} \to 0$ and $Q^{(k)}(x)=k!\theta$, where $\theta$ is
the leading coefficient of $Q$.  Hence $f^{(k)}(x)\to k! \theta$, and
therefore also $\Delta^kf(n)\to k!\theta$. If  now $\theta$ is irrational,
then so is $k!\theta$, 
so $\{f(n)\}$ belongs to class $\ckz$, and 
Theorem \ref{thm:main1}  implies that $\{a_n\}$ has maximal 
local Benford order $k$.

Now suppose that $k_P\ge k_Q$. Setting $k=k_P$, we have 
$Q^{(k+1)}(x)=0$, while $(P(x)\log x)^{(k+1)}\sim
c/x$ as $x\to\infty$ for some nonzero constant $c$.  
Hence, $xf^{(k+1)}(x)\to c$, and
therefore also $n\Delta^{k+1}f(n)\to c$.
Thus $\{f(n)\}$ belongs to class $\cko$, and 
by Theorem \ref{thm:main1} we conclude that $\{a_n\}$ has maximal 
local Benford order $k$.
\end{proof}

\begin{proof}[Proof of Corollary \ref{thm:cor3}]
By Corollary \ref{thm:cor2} (see Example \ref{ex:cor2}), the sequence 
of factorials, 
$\{n!\}$, has maximal local Benford order 1. By part (ii) of Theorem
\ref{thm:main1},  this implies that the iterated product 
sequences $\{n!^{(h)}\}$ obtained from this sequence have maximal local
Benford order $h$, as claimed. 
\end{proof}


\section{Proof of Theorem \protect \ref{thm:main2}}
\label{sec:proof-thm-main2}

We will prove Theorem \ref{thm:main2} by reducing the two statements of the
theorem to equivalent statements about uniform distribution modulo $1$ and
applying known results to prove these statements.  We will need the concept
of \emph{complete uniform distribution modulo $1$}, defined as follows
(see, for example, \cite{niederreiter-tichy1985}).

\begin{defn}[Complete uniform distribution modulo $1$]
\label{def:complete-udmod1}
A sequence $\{f(n)\}$ of real numbers is said to be \emph{completely
uniformly distributed modulo $1$} if, for any positive integer $k$, the 
$k$-dimensional sequence $\{(f(n),f(n+1),\dots,f(n+k-1))\}$ is uniformly
distributed modulo $1$ in $\RRk$.
\end{defn}

From the definition of local Benford distribution (see Definition
\ref{def:localBenford}) we immediately obtain  the following 
characterization of sequences with infinite maximal local Benford order in
terms of complete uniform distribution.

\begin{lem}
[Infinite maximal local Benford order and complete uniform distribution
modulo $1$]
\label{lem:local-benford-complete-udmod1}
Let $b$ be an integer base $\ge2$, let $\{a_n\}$ be a sequence of
positive real numbers, and let $f(n)=\log_b a_n$.
Then $\{a_n\}$ has infinite maximal local Benford order with respect 
to base $b$ if and only if the sequence $\{f(n)\}$ is completely uniformly
distributed modulo $1$.
\end{lem}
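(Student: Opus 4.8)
The plan is to unwind the definitions and chain together equivalences that are already in place. First I would note that, by Definition \ref{def:maxorder}(ii), saying that $\{a_n\}$ has infinite maximal local Benford order with respect to base $b$ is the same as saying that $\{a_n\}$ is locally Benford distributed of order $k$ with respect to $b$ for \emph{every} positive integer $k$. So the lemma will follow once I show that, for each fixed $k$, being locally Benford distributed of order $k$ is equivalent to the $k$-dimensional sequence $\{(f(n),f(n+1),\dots,f(n+k-1))\}$ being uniformly distributed modulo $1$ in $\RRk$, since the conjunction over all $k$ of the latter conditions is exactly the definition of complete uniform distribution modulo $1$ (Definition \ref{def:complete-udmod1}).

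For that fixed-$k$ equivalence I would invoke Lemma \ref{lem:localBenford}(ii) together with its proof: the defining relation \eqref{eq:defLocalBenford}, with $f(n)=\log_b a_n$, is literally the statement that $\{(f(n),f(n+1),\dots,f(n+k-1))\}$ is uniformly distributed modulo $1$ in $\RRk$, and by Corollary \ref{cor:weyl} this in turn is equivalent to the uniform distribution modulo $1$ of all the sequences $\{f_\bft(n)\}$ with $\bft\in\ZZkz$. Either formulation serves; the point is that "$\{a_n\}$ locally Benford of order $k$" is the $k$-dimensional uniform distribution statement verbatim. Combining this with the first paragraph gives: $\{a_n\}$ has infinite maximal local Benford order with respect to $b$ $\iff$ for every $k$ the $k$-dimensional sequence $\{(f(n),\dots,f(n+k-1))\}$ is uniformly distributed modulo $1$ in $\RRk$ $\iff$ $\{f(n)\}$ is completely uniformly distributed modulo $1$.

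I do not expect any genuine obstacle here: the lemma is essentially a bookkeeping statement, and its entire content is that "locally Benford of order $k$ for all $k$" translates, term by term in $k$, into "uniformly distributed modulo $1$ in $\RRk$ for all $k$." The only point warranting a moment of care is to make sure the translation from Lemma \ref{lem:localBenford}(ii) is used in its $k$-dimensional form (rather than only through the one-dimensional reformulation via the sums $f_\bft$), but since those two formulations are themselves equivalent by Corollary \ref{cor:weyl}, this is immediate.
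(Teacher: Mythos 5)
Your proposal is correct and matches the paper's treatment: the paper states this lemma without proof, declaring it immediate from Definition \ref{def:localBenford}, and your unwinding (infinite maximal order $\iff$ locally Benford of every order $k$ $\iff$ the $k$-dimensional sequence $\{(f(n),\dots,f(n+k-1))\}$ is uniformly distributed modulo $1$ for every $k$ $\iff$ complete uniform distribution) is exactly that immediate argument spelled out. No gaps.
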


Now note that for the sequences $\{a_n\}=\{a^{\theta^n}\}$ considered in
Theorem \ref{thm:main2} the function $f(n)=\log_b a_n$ 
has the form
$f(n)=\log_b(a^{\theta^n})=\alpha \theta^n$, where $\alpha=\log_ba$ is a
positive real number.
Thus, in view of Lemmas \ref{lem:localBenford} and
\ref{lem:local-benford-complete-udmod1}, Theorem \ref{thm:main2} reduces 
to the following proposition:

\begin{prop}
\label{prop:key2}
Let $\alpha>0$ be a real number.
\begin{itemize}
\item[(i)]  For almost all irrational numbers $\theta>1$, the sequence 
$\{\alpha \theta^n\}$ is completely uniformly distributed modulo $1$.
\item[(ii)] If $\theta$ is algebraic number of degree $k$, then there exists 
a $(k+1)$-dimensional vector $\bft=(t_0,t_1,\dots,t_k)
\in\ZZ^{k+1}\setminus\{0\}$ for which 
the sequence $\{\sum_{i=0}^kt_i\alpha\theta^{n+i}\}$ is not uniformly distributed modulo $1$.
\end{itemize}
\end{prop}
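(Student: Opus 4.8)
The strategy is to handle the two parts separately, using classical metric and Diophantine results on sequences of the form $\{\alpha\theta^n\}$.

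For part (i), the plan is to invoke a known metric theorem on complete uniform distribution of geometric-type sequences. Recall that a sequence is completely uniformly distributed modulo $1$ if and only if, for every $k$ and every nonzero integer vector $\bft=(t_0,\dots,t_{k-1})$, the sequence $\{t_0\alpha\theta^n+t_1\alpha\theta^{n+1}+\dots+t_{k-1}\alpha\theta^{n+k-1}\}=\{\alpha\theta^n(t_0+t_1\theta+\dots+t_{k-1}\theta^{k-1})\}$ is uniformly distributed modulo $1$. Since $\theta>1$ is irrational, for any fixed nonzero integer vector $\bft$ the polynomial value $c_\bft=t_0+t_1\theta+\dots+t_{k-1}\theta^{k-1}$ is a nonzero real number, so this reduces to asking whether $\{(\alpha c_\bft)\theta^n\}$ is uniformly distributed modulo $1$. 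The classical result of Koksma (see, e.g., Koksma's theorem in Chapter~1 of \cite{kuipers}) states that for any fixed real $\beta\ne 0$, the sequence $\{\beta\theta^n\}$ is uniformly distributed modulo $1$ for almost all $\theta>1$. The first step is therefore: fix $k$ and a nonzero $\bft$, set $\beta=\alpha c_\bft$, and note that $\{\beta\theta^n\}$ fails to be uniformly distributed only on a $\theta$-null set $E_{k,\bft}$. The second step is to take the countable union $E=\bigcup_{k}\bigcup_{\bft\in\ZZ^k\setminus\{\bfo\}} E_{k,\bft}$, which is still a null set, and observe that for $\theta\notin E$ the Weyl-criterion characterization (Corollary~\ref{cor:weyl}) shows $\{\alpha\theta^n\}$ is completely uniformly distributed modulo $1$. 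There is a minor subtlety: $\alpha$ is fixed in advance, so strictly one should note that the exceptional set depends on $\alpha$; but since the statement is "for almost all $\theta$" with $\alpha$ fixed, this causes no difficulty—one simply applies Koksma's theorem with the constant $\alpha c_\bft$ rather than with $\alpha$ itself.

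For part (ii), suppose $\theta$ is algebraic of degree $k$ with minimal polynomial $m(x)=c_kx^k+c_{k-1}x^{k-1}+\dots+c_0\in\ZZ[x]$ (clearing denominators so the $c_i$ are integers, not all zero). The key observation is that $\sum_{i=0}^k c_i\theta^{n+i}=\theta^n\sum_{i=0}^k c_i\theta^i=\theta^n\cdot m(\theta)=0$ for every $n$. Hence, taking $\bft=(c_0,c_1,\dots,c_k)\in\ZZ^{k+1}\setminus\{\bfo\}$, the sequence $\{\sum_{i=0}^k t_i\alpha\theta^{n+i}\}=\{\alpha\cdot 0\}=\{0\}=0$ is identically zero, which is certainly not uniformly distributed modulo $1$. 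This proves the claim immediately. The only thing to check is that $\bft\ne\bfo$, which holds since $m$ is a nonzero polynomial, and that $\bft$ has the right dimension $k+1$, which matches $\deg m=k$. Combining this with Lemma~\ref{lem:localBenford}, this shows the sequence $\{a^{\theta^n}\}$ is not locally Benford of order $k+1$, giving the upper bound in Theorem~\ref{thm:main2}(ii).

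The main obstacle is entirely in part (i): one must either cite an appropriate form of Koksma's metric theorem or, if a self-contained argument is preferred, reprove it via the classical variance estimate $\int_1^\infty\bigl|\frac1N\sum_{n\le N}e^{2\pi i\beta\theta^n}\bigr|^2\,d\theta \ll 1/N$ (obtained by expanding the square and bounding $\int \theta^{m-n}\,d\theta$-type terms, using that distinct powers $\theta^m,\theta^n$ are "independent" enough over the relevant range), followed by a Borel–Cantelli argument along a subsequence $N=N_j$ and an interpolation step to fill in intermediate $N$. Part (ii) is essentially a one-line observation once one spots the minimal-polynomial trick, so no real difficulty there. I would present part (ii) first for its brevity and then devote the bulk of the write-up to carefully assembling the countable union of null sets in part (i).
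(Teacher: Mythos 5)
Your part (ii) is exactly the paper's argument: the paper takes $\bft$ to be the (integer) coefficient vector of the minimal polynomial $p$ of $\theta$, so that $\sum_{i=0}^k t_i\alpha\theta^{n+i}=\alpha\theta^n p(\theta)=0$ identically, and an identically zero sequence is not uniformly distributed modulo $1$. Nothing to add there.

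Part (i), however, has a genuine gap. After the (correct) reduction via Corollary \ref{cor:weyl}, you need the sequence $\{\alpha P_\bft(\theta)\,\theta^n\}_n$, where $P_\bft(\theta)=\sum_{i=0}^{k-1}t_i\theta^i$, to be uniformly distributed modulo $1$ for almost all $\theta$. You propose to ``set $\beta=\alpha c_\bft$'' and invoke Koksma's classical theorem for $\{\beta\theta^n\}$ with \emph{fixed} $\beta$. But $c_\bft=P_\bft(\theta)$ depends on $\theta$: Koksma's theorem produces, for each fixed $\beta$, an exceptional null set $E_\beta$ that itself depends on $\beta$, and the set you actually need to be null is the diagonal set $\{\theta:\theta\in E_{\alpha P_\bft(\theta)}\}$, which is in effect an uncountable union of null sets along a curve and cannot be controlled by the countable-union argument you describe. (A smaller slip: irrationality of $\theta$ does not force $P_\bft(\theta)\neq0$ once the vector has length $\ge 3$ --- e.g.\ $\theta=\sqrt2$ annihilates $\bft=(-2,0,1)$ --- though this only concerns the null set of algebraic $\theta$.) The gap is repairable in two standard ways: either apply Koksma's \emph{general} metric theorem (Chapter 1, Theorem 4.3 of \cite{kuipers}) to the sequence of functions $u_n(\theta)=\alpha P_\bft(\theta)\theta^n$ on compact intervals avoiding the finitely many zeros of $P_\bft$, verifying the hypotheses on $\tfrac{d}{d\theta}(u_m-u_n)$; or do what the paper does, namely cite Franklin's Theorem 15 \cite{franklin1963}, of which part (i) is a special case --- that theorem asserts complete uniform distribution of $\{\alpha\theta^n\}$ for almost all $\theta>1$ directly, with no reduction needed. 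Your alternative self-contained $L^2$/Borel--Cantelli route would face the same issue and would have to carry the $\theta$-dependent factor $P_\bft(\theta)$ through the variance estimate.
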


\begin{proof}[Proof of Proposition \ref{prop:key2}]
The two results are implicit in Franklin \cite{franklin1963}.
Part (i) is a special case of \cite[Theorem 15]{franklin1963}.
Part (ii) is essentially implicit in the proof of \cite[Theorem 16]{franklin1963}
and can also be seen as follows:  
Suppose $\theta$ is algebraic of degree
$k$. Then there exists a polynomial $p(x)=\sum_{i=0}^k a_i x^i$ 
with $a_i\in\ZZ$, $a_k\not=0$, such that $p(\theta)=0$. Letting 
$\bft=(a_0,a_1,\dots,a_k)$, we then have $\bft\in\ZZ^{k+1}\setminus\{0\}$
and
\[
\sum_{i=0}^k t_i \alpha \theta^{n+i}
=\sum_{i=0}^k a_i \alpha \theta^{n+i}
=\alpha\theta^nP(\theta)=0
\]
for all $n\in\NN$.
Thus, the sequence 
$\{\sum_{i=0}^k a_i \alpha \theta^{n+i}\}$
cannot be uniformly distributed modulo $1$. 
\end{proof}


\section{Concluding remarks}
\label{sec:concluding-remarks}

We have chosen the classes $\cki$  as our basis for Theorem \ref{thm:main1}
as these classes have a relatively simple and natural definition, while
being sufficiently broad to cover nearly all of the sequences for which
Benford's Law is known to hold. However, it is clear that similar
results could be proved under a variety of other assumptions on the
asymptotic behavior of $\Delta^k f(n)$.  For example, the class $\cka$
could be generalized to sequences $\{a_n\}$ for which $f(n)=\log_b a_n$
satisfies $n^\alpha(\log n)^\beta\Delta^k f(n)\to \lambda$  for some
constants $\lambda\not=0$, $0<\alpha<1$ and $\beta$.

A natural question is whether asymptotic conditions like those above on
the behavior of $\Delta^k f(n)$ can be replaced by Fejer type monotonicity
conditions as in Theorem 3.4 in Chapter 1 of \cite{kuipers}.  The inductive
argument we have used to prove Proposition \ref{prop:key} depends crucially
on having an asymptotic relation for $\Delta^k f(n)$ and completely breaks
down if we do not have an asymptotic formula of this type available.  
In particular, it is not clear if the conclusion of Proposition
\ref{prop:key} remains valid under the Fejer type conditions of
\cite[Chapter 1, Theorem 3.4]{kuipers}, which require that $\Delta^k f(n)$
be monotone and satisfy $\Delta^k f(n)\to 0$ and $n\Delta^k f(n)\to\infty$
as $n\to\infty$.

An interesting feature of our results, pointed out to the authors by the
referee, is that the quality of the local Benford distribution of a sequence
is largely independent of the quality of its global Benford distribution. A
sequence can have excellent global distribution properties (in the sense that
the leading digit frequencies  converge very rapidly to the Benford
frequencies), while having very poor local distribution properties.  For
instance, any geometric sequence $\{a^n\}$ with $\log_{10}a\not\in\QQ$ has 
maximal local Benford order $1$ in base $10$ and thus  possess the smallest
level of local Benford distribution among sequences that are Benford
distributed. On the other hand, the rates of convergence of the leading
digit frequencies in such sequences are closely tied to the irrationality 
exponent of $\log_{10} a$ and can vary widely.

Our results suggest that the rate of growth of a sequence $\{a_n\}$ is
closely tied to the maximal local Benford order, provided 
$f(n)=\log_b a_n$ behaves, in an appropriate sense, sufficiently
``smoothly''.  In particular, sequences $\{a_n\}$ for which $f(n)$ is a
``smooth'' function of polynomial (or slower) rate of growth cannot be
expected to have  infinite maximal local Benford order.  However,
this heuristic does not apply to sequences for which $f(n)$ behaves more
randomly.   One such example is the sequence of Mersenne numbers,
$2^{p_n}-1$, where $p_n$ is the $n$-th prime. In this case the behavior of
$f(n)$ is determined by the behavior of the sequence of primes, which, 
while growing at a smooth rate (namely, $n\log n$), 
at the local level exhibit  random-like behavior.  Indeed, recent 
numerical evidence (see \cite{mersenne-benford}) suggests that the
sequence of Mersenne numbers, $\{2^{p_n}-1\}$, does have infinite maximal
local Benford order. This is in stark contrast to ``smooth'' sequences with
similar rate of growth such as $\{2^{n\log n}-1\}$, which, by Theorem
\ref{thm:main1}, have maximal local Benford order $1$.

Theorem \ref{thm:main2} could be generalized and strengthened in several
directions by using known metric results on complete uniform distribution. 
For example, Niederreiter and Tichy \cite{niederreiter-tichy1985} showed
that, for any sequence $\{k_n\}$ of distinct positive integers and almost
all $\theta>1$, the sequence $\{\theta^{k_n}\}$ is uniformly distributed
modulo $1$. Their argument applies equally to sequences of the form
$\{\alpha\theta^{k_n}\}$, where $\alpha>0$. By following the proof of
Theorem \ref{thm:main2}, the latter result translates to a statement on the
maximal local Benford order of sequences of the form
$\{a^{\theta^{k_n}}\}$.   

A well-known limitation of metric results of the
above type is that they are not constructive: the results guarantee the
\emph{existence} of sequences with the desired distribution properties, but 
are unable to determine whether a \emph{given} sequence has these properties.
The same limitations apply to the result of Theorem
\ref{thm:main2}. 
Thus, while Theorem \ref{thm:main1} allows us to construct sequences
of arbitrarily large \emph{finite} local Benford order (for example,
sequences of the form $\{2^{n^d}\}$), we do not know of a single
``natural'' example of a sequence with infinite maximal local Benford
order.


\vspace*{4ex}


\begin{thebibliography}{10}

\bibitem{anderson2011}
T.~C. Anderson, L.~Rolen, and R.~Stoehr, \emph{Benford's law for
  coefficients of modular forms and partition functions}, Proc. Amer. Math.
  Soc. \textbf{139} (2011), no.~5, 1533--1541. 

\bibitem{benford}
F.~Benford, \emph{The law of anomalous numbers}, Proc. Amer.
  Philosophical Soc. \textbf{78} (1938), no.~4, 551--572.

\bibitem{berger2011}
A.~Berger and T.~P. Hill, \emph{A basic theory of {B}enford's law},
  Probab. Surv. \textbf{8} (2011), 1--126. 

\bibitem{berger2015}
\bysame, \emph{An introduction to {B}enford's law}, Princeton University Press,
  Princeton, NJ, 2015. 

\bibitem{benfordonline}
A.~Berger, T.~P.~Hill, and E.~Rogers, 
\emph{Benford online bibliography}, 
\url{http://www.benfordonline.net}. Last accessed 06.10.2018.


\bibitem{mersenne-benford}
Z.~Cai, M.~Faust, A.~J. Hildebrand, J.~Li, and Y.~Zhang,
  \emph{Leading digits of {M}ersenne numbers}, Preprint (2018).


\bibitem{diaconis}
P.~Diaconis, \emph{The distribution of leading digits and uniform
  distribution {${\rm mod}$} {$1$}}, Ann. Probability \textbf{5} (1977), 
  no.~1, 72--81. 




\bibitem{franklin1963}
J. Franklin, \emph{Deterministic simulation of random processes}, 
Math. Comp. \textbf{17} (1963), 28--59.


\bibitem{graham-kolesnik}
S.~W.~Graham and G.~Kolesnik, 
\emph{Van der Corput's method of exponential sums}, 
London Mathematical Society Lecture Note Series, 
Cambridge University Press, Cambridge, 1991.

\bibitem{hill1995}
T.~P. Hill, \emph{The significant-digit phenomenon}, Amer. Math. Monthly
  \textbf{102} (1995), no.~4, 322--327. 

\bibitem{kuipers}
L.~Kuipers and H.~Niederreiter, \emph{Uniform distribution of sequences},
  Wiley-Interscience, New York-London-Sydney, 1974.

\bibitem{masse2011}
B.~Mass{{\'e}} and D.~Schneider, \emph{A survey on weighted densities
  and their connection with the first digit phenomenon}, Rocky Mountain J.
  Math. \textbf{41} (2011), no.~5, 1395--1415.

\bibitem{masse2015}
\bysame, \emph{Fast growing sequences of numbers and the first digit
  phenomenon}, Int. J. Number Theory \textbf{11} (2015), no.~3, 705--719.

\bibitem{miller2015}
S.~J. Miller (ed.), \emph{Benford's {L}aw: Theory and Applications},
  Princeton University Press, Princeton, NJ, 2015. 


\bibitem{mordell}
L. J. Mordell, 
\emph{On the Kusmin-Landau inequality for exponential sums}, 
Acta Arith. \textbf{4} (1958), 3--9.

\bibitem{newcomb}
S. Newcomb, \emph{Note on the frequency of use of the different
  digits in natural numbers}, Amer. J. Math. \textbf{4} (1881), no.~1-4,
  39--40.
  

\bibitem{niederreiter-tichy1985}
H. Niederreiter and R. Tichy, 
\emph{Solution of a problem of Knuth on complete uniform
              distribution of sequences}, Mathematika \textbf{1} (1985),
              26--32.  

\bibitem{nigrini2012}
M. Nigrini, 
\emph{Benford's Law: Applications for forensic accounting, auditing, and
fraud detection}, John Wiley Sons, Inc., 2012.

\bibitem{raimi1976}
R.~A. Raimi, \emph{The first digit problem}, Amer. Math. Monthly \textbf{83}
  (1976), no.~7, 521--538. 


\end{thebibliography}
\end{document}